\newcommand{\argmin}[1]{\underset{#1}{\mathrm{argmin}}}
\newtheorem{theorem}{Theorem}[section]
\newtheorem{lemma}[theorem]{Lemma}
\newtheorem{proposition}[theorem]{Proposition}
\newtheorem{corollary}[theorem]{Corollary}
\theoremstyle{definition}
\newtheorem{definition}[theorem]{Definition}
\theoremstyle{remark}
\newtheorem{remark}[theorem]{Remark}
\numberwithin{equation}{section}
\newcommand{\abs}[1]{\lvert#1\rvert}
\begin{document}

 \title{Exact Recovery of Chaotic Systems from Highly Corrupted Data}
\author{Giang Tran}
\address{Department of Mathematics, The University of Texas at Austin}
\email{gtran@math.utexas.edu}

\author{Rachel Ward}
\address{Department of Mathematics, The University of Texas at Austin}
\email{rward@math.utexas.edu}

\begin{abstract}
Learning the governing equations in dynamical systems from time-varying measurements is of great interest across different scientific fields.   This task becomes prohibitive when such data is moreover highly corrupted, for example, due to the recording mechanism failing over unknown intervals of time.  When the underlying system exhibits chaotic behavior, such as sensitivity to initial conditions, it is crucial to recover the governing equations \emph{with high precision}.  In this work, we consider continuous time dynamical systems $\dot{x} = f(x)$ where each component of $f: \mathbb{R}^{d} \rightarrow \mathbb{R}^d$ is a multivariate polynomial of maximal degree $p$; we aim to identify $f$ exactly from possibly highly corrupted measurements $x(t_1), x(t_2), \dots, x(t_m)$.  As our main theoretical result, we show that if the system is sufficiently ergodic that this data satisfies a strong central limit theorem (as is known to hold for chaotic Lorenz systems), then the governing equations $f$ can be \emph{exactly recovered} as the solution to an $\ell_1$ minimization problem -- \emph{even if a large percentage of the data is corrupted by outliers}.  Numerically, we apply the alternating minimization method to solve the corresponding constrained optimization problem.  Through several examples of 3D chaotic systems and higher dimensional hyperchaotic systems, we illustrate the power, generality, and efficiency of the algorithm for recovering governing equations from noisy and highly corrupted measurement data.  
\end{abstract}

\maketitle

\section{Introduction}
Discovering the underlying dynamical equations from time-dependent observations is of great interest across many scientific fields. Examples include statistical learning theory \cite{vapnik2013nature}, manifold learning \cite{roweis2000nonlinear}, machine learning \cite{jordan2015machine}, physical modeling \cite{schittkowski2013numerical}, and system identification \cite{ljung1998system,ljung2010perspectives}. In those examples, it is generally assumed that the governing equations can be expressed as a combination of terms in an appropriate functional space \cite{hastie2015statistical}. The selection of the important terms within this space is obtained from regularization, pruning, shrinking or regression.  An overview of reconstruction methods for dynamical systems can be found in \cite{sjoberg1995nonlinear}. Indeed, it is shown that without additional information, learning the governing equations from measurement data is \textit{intractable} \cite{cubitt2012extracting}, and may face the \textit{curse of dimensionality} \cite{fornasier2012learning}, regardless of how much data there is.

Identification of nonlinear dynamical systems is one of the most active areas in system identification and one of the main topics in the rapid development of nonlinear dynamics \cite{ljung2010perspectives, aguirre2009modeling}. It is shown that simple nonlinearities in the governing equations can lead to incredibly complicated behavior in the solutions of the nonlinear systems, which is the so-called \emph{butterfly effect} present in chaotic solutions, \cite{sprott1994some,sprott2000algebraically,lu2004new}. For example, the well-known three-variable Lorenz system \cite{lorenz1963deterministic} has seven terms with only one nonlinear term of quadratic type on the right hand side, and the R{\" o}ssler system \cite{rossler1976chemical} has six terms with only one nonlinear term, yet their solutions exhibit chaotic behavior.

 There have been many approaches to extract the underlying structures of chaotic systems from time-dependent data. A review of major methods in modeling nonlinear dynamics and chaotic systems can be found in\cite{aguirre2009modeling}. One of the main directions is the reconstruction of state space from one-dimensional realizations, \cite{packard1980geometry, crutchfield1987equations, casdagli1991state, kennel1992determining, kugiumtzis1996state}. Those methods rely on the estimation of the time delay and the embedding dimension to reconstruct a state space which preserves the topological properties of the original system. In \cite{crutchfield1987equations, rowlands1992extraction}, the authors use the singular value decomposition to determine the appropriate dependent variables that will appear in the dynamical equations. The reconstruction guarantee in the absence of noise, called the \textit{delay embedding theorem}, is proved by \cite{takens1981detecting}. However, it is showed that in some special choices of parameters,  two different systems can produce the exact same time series of one of their variables \cite{lainscsek2012class}.
 
 Even with the availability of state-space data, it is challenging to recover the parameters in the governing equations.   In \cite{schmidt2009distilling}, the authors use symbolic regression to find both the parameters and the forms of the equations simultaneously.  In \cite{brunton2015discovering}, the authors recast the problem of recovering coefficients in a known basis as a linear regression, where the matrix for regression is built from the data.  They moreover incorporate thresholding in the regression to further promote sparsity in the recovered coefficients that govern the chaotic systems.

Sparsity has been playing a significant role in the developments of compressed sensing, image processing, optimization, and many others. Recently, sparse-inducing methods used in image processing and compressed sensing have been applied to partial differential equations, dynamical systems and physical sciences \cite{williams2012low, schaeffer2013sparse, cheng2013dynamically, bright2013compressive,ozolicnvs2013compressed,ozolicnvs2014compressed, proctor2014exploiting, mackey2014compressive, brunton2014compressive, caflisch2015pdes,  tran20151,  brunton2015discovering}. In these works, the authors study either the sparse property of the solutions in different contexts or the sparse structures of the governing equations. In the latter direction, it turns out that many chaotic systems have simple algebraic representations corresponding to a sparse representation in high dimensional nonlinear functional spaces \cite{sprott1994some,sprott2000algebraically,lu2004new}.

In this work, we bring together connections between compressed sensing, splitting optimization methods, sparse representations of the governing equations, and the statistical properties of chaotic systems, to provide \emph{exact recovery guarantees} for classes of chaotic systems. In particular, we provide conditions and algorithms for recovering the governing equations from possibly highly corrupted data generated from a class of \emph{Lorenz-like systems} which includes the classical Lorenz equations, and are known to be ergodic.  Explicitly \textbf{when the underlying attractor of the flow has Hausdorff dimension greater than two, the flow satisfies some mixing properties and the governing equation vector $f$ has a sparse representations in the space of multivariable polynomials,  then the polynomial coefficients of $f$ as well as the outlier vectors can be exactly recovered as the unique solution to a partial $\ell_1$-minimization problem with high probability (depends on the number of measurements), as long as the number of measurements is big enough and sparse level is low enough.} We prove theoretical reconstruction guarantees by combining the partial sparse recovery results \cite{bandeira2013partial} with statistical behavior of the Lorenz-like systems \cite{araujo2014statistical,araujo2015rapid}. It is based on the observation that although  individual trajectories are highly unpredictable due to the sensitivity property to initial conditions of chaotic dynamic systems, their statistical behavior is understandable and share many of the same properties of random sequences.  We finally mention that for our theoretical results, we do not necessarily require the system to be chaotic in the formal sense, but rather we use certain ergodicity properties which are satisfied for a class of geometric Lorenz attractors which are also known to be chaotic.   Therefore, our theory can likely be generalized to other dynamical systems with similar ergodicity properties.

The paper is divided as follows. In Section 2, we explain the problem setting. In Section 3, we first review some results from compressed sensing and statistical properties of chaotic systems. Then we present our theoretical reconstruction guarantees and state in which conditions the sparse solutions can be recovered. The numerical implementations and results are described in Sections 4 and 5. Concluding remarks are given in Section 6.

\section{Problem Setting}
In this work, we are interested in reconstructing the governing equations of a chaotic system from the time-varying measurement data in which data is corrupted at unknown intervals of time. Here, we consider  chaotic systems of the form
\begin{equation}
\dfrac{d}{dt}x(t) = f(x(t)),
\end{equation}
where the column vector $x(t) = (x_1(t), x_2(t),\ldots, x_d(t))^T$ represents the state of the system at time $t$ and the nonlinear function vector $f(x) = (f_1(x),f_2(x),\ldots, f_d(x))^T$ defines the dynamic motions. Even though chaotic systems are deterministic, they exhibit stochastic behavior \cite{lorenz1963deterministic}. Such behavior is commonly referred as the \textit{butterfly effect}, \textit{i.e.}, small differences in initial conditions will yield much larger differences in the outcomes. Therefore, it is very important to recover the governing equations with high accuracy.

In many chaotic systems inspired by physical and biological processes, the governing equations may consist of only a few terms in a high dimensional nonlinear functional space \cite{lorenz1963deterministic, rossler1976chemical,sprott1994some,sprott2000algebraically,lu2004new}. This observation also holds for many dynamical systems and partial differential equations where the complicated system can be well-approximated by a simpler system with similar behavior \cite{schaeffer2013sparse, brunton2015discovering}. Explicitly, for many chaotic systems, the governing equations can be represented in the space of polynomial functions
\begin{equation}
\label{eq:poly}
f_j(x(t)) = c_{j0} + \sum\limits_{k} c_{j,k}x_k(t) + \sum\limits_{k,l} c_{j,k,l}\, x_k(t) \, x_l(t) +\sum\limits_{k,l,n} c_{j,k,l,n}\, x_k(t) \, x_l(t)\, x_n(t) + \ldots
\end{equation}
where the coefficient vectors $c_{j} = (c_{j,0} ,c_{j,1},\ldots, c_{j,d}, c_{j,1,1},\ldots, c_{j,d,d},\ldots)$ are moreover sparse.  Note that $c_j$ is a vector of length 
\begin{equation}
\label{eq:r}
r = {p+d \choose d} \leq \left( \frac{p+d}{d} \right)^d,
\end{equation}
the maximal number of monomials of degree at most $p$ in a multivariate polynomial in $d$ variables.  In fact, many key dynamical systems  arising from applications in biology and physics, such as the Lorenz system, are in fact \emph{bilinear}, so that $p=2$.

For the rest of the paper, we denote $d$ the dimension of the system, $m$ the number of measurements and $r$ the cardinal of the basis for a given nonlinear functional space. Given a collection of data at different times $\{x(t_1), x(t_2),\ldots, x(t_m)\}$, as in \cite{brunton2015discovering}, we construct the following matrices $X$, $\dot{X}$ and $\Phi(X)$ to store the measurement data, the time derivative of the data and the dictionary built from the data:

\begin{align}
   X &= \begin{bmatrix}
       |  &   |     &           & |      \\
   X_1 & X_2 & \ldots & X_d  \\
       |  &  |      &            & |      \\
   \end{bmatrix}
   = \begin{bmatrix}
           x_1(t_1) & x_2(t_1) &\ldots & x_d(t_1)\\
           x_1(t_2) & x_2(t_2)&\ldots & x_d(t_2)\\
           \vdots &\vdots&\ldots & \vdots\\
           x_1(t_m) & x_2(t_m) &\ldots & x_d(t_m)\\
         \end{bmatrix}_{m\times d} \\
       \dot{X} &= \begin{bmatrix}
        |          &   |            &           & |      \\
   \dot{X}_1 & \dot{X}_2 & \ldots & \dot{X}_d  \\
       |           &  |                &            & |      \\
   \end{bmatrix}
_{m\times d}
 = \begin{bmatrix}
           \dot{x}_1(t_1) & \dot{x}_2(t_1) &\ldots & \dot{x}_d(t_1)\\
           \dot{x}_1(t_2) & \dot{x}_2(t_2)&\ldots & \dot{x}_d(t_2)\\
           \vdots &\vdots&\ldots & \vdots\\
           \dot{x}_1(t_m) & \dot{x}_2(t_m) &\ldots & \dot{x}_d(t_m)\\
         \end{bmatrix}_{m\times d} \\
          \Phi(X)  &= \begin{bmatrix}
           |  &  | & |      & |  &    \\
           1 & X & X^{P_2} & X^{P_3} & \ldots \\
           |  &  | & |      & |  &    \\
            \end{bmatrix}_{m\times r}
  \end{align}
 where for each positive integer $k$, $X^{P_k}$ denotes the values of all monomials of total degree $k$ at the times $t_1, t_2,\ldots, t_m$. For example, 
\begin{equation}
    \begin{aligned}
   X^{P_2} &= \begin{bmatrix}
           x_1^2(t_1) & x_1(t_1) x_2(t_1) &\ldots & x_1(t_1) x_d(t_1) & x_2^2(t_1) &\ldots & x_d^2(t_1)\\
           x_1^2(t_2)& x_1(t_2) x_2(t_2) & \ldots& x_1(t_2) x_d(t_2) & x_2^2(t_2) & \ldots & x_d^2(t_2) \\
           \vdots & \vdots & \ldots &\vdots &\ldots &\vdots\\
           x_1^2(t_m) & x_1(t_m) x_2(t_m) & \ldots & x_1(t_m) x_d(t_m)&x_2^2(t_m) & \ldots & x_d^2(t_m)
         \end{bmatrix}_{m\times \left(\frac{d(d+1)}{2}\right)}
         \end{aligned}
         \end{equation}
In the case of non-damaged data, \cite{brunton2015discovering} observed that the problem of finding $f$ of the form \eqref{eq:poly}  in the case of exact data $X$ and $\dot{X}$ can be reformulated as finding the coefficient matrix ${\mathcal C}=[c_1 \quad c_2\quad \ldots\quad c_d]_{r\times d}$ such that $\dot{X} = \Phi(X) {\mathcal C}$.  Thus, given noisy data, such as if $\dot{X}$ is approximated via finite differences of successive values of $X$, they propose to solve for ${\mathcal C}$ via linear regression: 
\begin{equation}
{\mathcal C} = \argmin{\widetilde{\mathcal C}} \dfrac{1}{2}\| \dot{X} - \Phi(X) \widetilde{\mathcal C} \|_2^2
\end{equation}
The authors moreover incorporate hard thresholding into the regression procedure to promote sparsity in ${\mathcal C}$. They illustrate the effectiveness of their algorithm for stably reconstructing the governing coefficients in the presence of a small amount of noise, but do not provide theoretical performance guarantees.  In particular, there are no guarantees a priori that the matrix $\Phi(X)$ will be full-rank, which is necessary for uniqueness of the recovered ${\mathcal C}$. For example, the matrix $\Phi(X)$ will not have full rank if $x(t_k) = x(t_{k+1})$ is at a fixed point, or if  $x(t_{k+L}) = x(t_{k})$ is fixed at a cycle of length $L < r$ more generally.

As a by-product of our main theorem, we provide conditions under which the regression algorithm from \cite{brunton2015discovering} is theoretically justified; namely, will show that \emph{for chaotic data,  the columns are independent, and thus the matrix $\Phi(X)$ has full column rank and thus the polynomial coefficients generating the data is unique.}  See Corollary \ref{cor:fullrank} for more details.  More generally, we are interested in the case when the data is might be highly corrupted, such as over unknown intervals of time due to disruption of the measurement device.  

Our set-up is as follows: consider data $X^\circ$ which is corrupted in the sense that some small fraction of the $m$ measurements $x(t_1), x(t_2), \dots, x(t_m)$ and $\dot{x}(t_1), \dot{x}(t_2), \dots, \dot{x}(t_m)$ are perturbed by bounded additive error: $x^{o}(t_j) = x(t_j) + \theta_j$ and $\dot{x}^{o}(t_j) = \dot{x}(t_j) + \theta'_j$.   Exploiting that the corruptions are sparse, the matrix difference ${\mathcal E} = \dot{X^\circ} - \Phi(X^\circ){\mathcal C}$ will only have a small fraction of its rows which are non-zero.  Thus, denoting the $j$th row of ${\mathcal E}$ by $\mathcal{E}(j,:)$, the optimization problem of jointly recovering the polynomial governing coefficients and locations of the corruptions can be posed as a sparse recovery problem:
\begin{equation}
\begin{aligned}
\min\limits_{(\mathcal C,\mathcal E)} \,  &\|  \{ j:   \mathcal E(j,:) \text{ is non-zero } \} \| \\
&\text{  subject to}\quad \Phi(X^\circ){\mathcal C}+ {\mathcal E}=\dot{X^\circ}. 
\end{aligned}
\end{equation}
Of course, this optimization problem is intractable, so we relax the objective function to be convex.  To enforce the group-sparsity with respect to the rows of ${\mathcal E}$ \cite{fornasier2008recovery, kowalski2009sparse, deng2013group}, we consider the following optimization model
\begin{equation}
\begin{aligned}
\min\limits_{({\mathcal C}, {\mathcal E})} \, \| {\mathcal E}\|_{2,1} = &\min\limits_{({\mathcal C}, {\mathcal E})} \, \sum\limits_{j=1}^m\, \| {\mathcal E}(j,:) \|_2,\\
&\text{subject to}\quad \Phi(X^\circ){\mathcal C} + {\mathcal E}  =\dot{X^\circ}. 
\end{aligned}
\label{model:main}
\end{equation}
For more details, see Section 3. To enforce additional sparsity in the polynomial coefficient matrix ${\mathcal C}$, we will also consider the problem
\begin{equation}
\begin{aligned}
\min\limits_{({\mathcal C}, {\mathcal E})} \, \| {\mathcal E} \|_{2,1} = &\min\limits_{({\mathcal C}, {\mathcal E})} \, \sum\limits_{j=1}^m\, \| {\mathcal E}(j,:) \|_2,\\
&\text{subject to}\quad \Phi(X^\circ){\mathcal C} + {\mathcal E}  =\dot{X^\circ} \quad \text{and}\quad {\mathcal C} \quad \text{is sparse.}
\end{aligned}
\label{model:sparse}
\end{equation}
For more details, see Sections 4 and 5.

\section{Reconstruction Guarantee Analysis}
In this section, we first recall some results from compressive sensing and in particular, partial sparse recovery problems therein, as well as statistical properties of Lorenz-like systems. Then we present our theoretical guarantees for the framework \eqref{model:main}.

\subsection{Theory from Compressive Sensing}

The compressive sensing paradigm, in its most basic form as introduced in  \cite{do06b, candes2006stable}, considers, for an underdetermined linear system of equations $y = A x$, the NP hard minimization problem
\begin{equation}
\begin{aligned}
x_0 = &\argmin{z} \quad  \| z \|_0 = | \{ j: | z_j| > 0 \} |  \\
&\text{  subject to}\quad Az = y.
\end{aligned}
\label{l0}
\end{equation} 
The convex relaxation of this problem is the $\ell_1$ minimization problem
\begin{equation}
\begin{aligned}
x_1 = &\argmin{z} \quad \| z \|_1 = \sum_{j=1}^n | z_j | \\
&\text{  subject to}\quad Az = y.
\end{aligned}
\label{l1}
\end{equation} 

Compressive sensing theory provides conditions on the underdetermined matrix $A$ such that the solutions $x_0$ and $x_1$ are equivalent and equal to $x$ satisfying $Ax = y$ whenever there exists such an $x$ is sufficiently sparse.  The theoretical guarantees are also stable with respect to non-exact sparse solutions and robust with respect to additive noise on the measurements $y = Ax$, but for simplicity we discuss only the case of exact sparsity here.  For a comprehensive overview of compressive sensing, we refer the reader to \cite{foucart2013mathematical}.

From here on out, we say that a vector $x \in \mathbb{R}^n$ is \emph{$s$-sparse}.  As shown in \cite{do06b, candes2006stable, cohen2009compressed}, a certain  \emph{null-space property} for an $m \times n$ matrix $A$ is a sufficient and necessary condition for sparse solutions to be exactly recovered via $\ell_1$ minimization. 

\begin{proposition}
\label{prop:nsp}
Given a matrix $A \in \mathbb{R}^{m \times n}$, every $s$-sparse vector $x$ is the unique solution of \eqref{l1} with $y = Ax$ if and only if for every $v \in \mathbb{R}^n \setminus \{0\}$ in the null space of $A$, and for every set $S \subset \{1 , 2, \dots, n\}$ of cardinality $s$, the following holds:
$$
\| v_S \|_1 < \frac{1}{2} \| v \|_2.
$$
\end{proposition}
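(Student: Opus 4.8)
The plan is to prove the equivalence in both directions, using throughout the elementary observation that two feasible points of \eqref{l1} differ by a vector in the null space of $A$. For the sufficiency direction ($\Leftarrow$), I would fix an $s$-sparse vector $x$, let $S \supseteq \supp(x)$ have $|S| = s$, and take an arbitrary competitor $z$ with $Az = Ax$ and $z \neq x$. Setting $v = x - z$, so that $v \in \ker A \setminus \{0\}$, I would split the $\ell_1$ norm across $S$ and its complement, use $x_{S^c} = 0$ (hence $z_{S^c} = -v_{S^c}$) together with the reverse triangle inequality to obtain $\|z\|_1 \geq \|x\|_1 + \bigl( \|v_{S^c}\|_1 - \|v_S\|_1 \bigr)$. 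It then suffices to show $\|v_S\|_1 < \|v_{S^c}\|_1$, and here the hypothesis enters: since $\|v\|_2 \leq \|v\|_1 = \|v_S\|_1 + \|v_{S^c}\|_1$, the assumed bound $\|v_S\|_1 < \frac{1}{2}\|v\|_2$ gives $\|v_S\|_1 < \frac{1}{2}\bigl( \|v_S\|_1 + \|v_{S^c}\|_1 \bigr)$, i.e. $\|v_S\|_1 < \|v_{S^c}\|_1$. This forces $\|z\|_1 > \|x\|_1$, so $x$ is the unique minimizer.

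For the necessity direction ($\Rightarrow$), I would argue by explicit construction. Given any $v \in \ker A \setminus \{0\}$ and any index set $S$ with $|S| = s$, set $x := v_S$, which is $s$-sparse, and $y := Ax$. Because $Av = 0$, the vector $z := -v_{S^c}$ satisfies $Az = -A v_{S^c} = A v_S = y$, and $z \neq x$ since $x$ is supported on $S$ while $z$ is supported on $S^c$ and $v \neq 0$. By the assumed uniqueness of the $\ell_1$-minimizer $x$ among feasible points, $\|x\|_1 < \|z\|_1$, that is, $\|v_S\|_1 < \|v_{S^c}\|_1$.

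The step I expect to be the main obstacle is reconciling these two arguments with the $\ell_2$ norm on the right-hand side of the stated inequality. In sufficiency the $\ell_2$ bound is comfortable precisely because $\|v\|_2 \leq \|v\|_1$ makes it the stronger hypothesis, so it implies the $\ell_1$ comparison that the perturbation argument needs. The necessity construction, however, naturally delivers only $\|v_S\|_1 < \|v_{S^c}\|_1$, equivalently $\|v_S\|_1 < \frac{1}{2}\|v\|_1$, and passing from $\frac{1}{2}\|v\|_1$ to $\frac{1}{2}\|v\|_2$ runs against the very inequality $\|v\|_2 \leq \|v\|_1$. I would therefore concentrate the hard part of the argument here: either by exploiting that the bound is required to hold simultaneously for \emph{every} $S$ of cardinality $s$ — in particular for the indices of the $s$ largest-magnitude entries of $v$, where $\|v_S\|_1$ is maximized — to try to extract additional strength, or by verifying whether the intended right-hand side should in fact carry the $\ell_1$ norm. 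This matching of the two norms is the crux of the equivalence.
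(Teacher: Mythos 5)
Your proof is correct, and your diagnosis of the crux is exactly right: the $\ell_2$ on the right-hand side of the stated inequality is a typo for $\ell_1$. The paper does not prove this proposition itself (it cites the standard literature), and the standard characterization --- as well as every other null space property appearing in this paper, e.g.\ the partial NSP definition and Corollary~\ref{prop_cs}, which read $\| \tilde v_S \|_1 < \frac{1}{2} \| \tilde v \|_1$ --- carries the $\ell_1$ norm on the right. Your two directions are precisely the standard argument: the splitting/reverse-triangle estimate for sufficiency, and the construction $x = v_S$, $z = -v_{S^c}$ for necessity, which delivers $\|v_S\|_1 < \|v_{S^c}\|_1$, i.e.\ $\|v_S\|_1 < \frac{1}{2}\|v\|_1$.

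Your instinct that the necessity gap cannot be closed is also correct, and is worth making concrete: the ``only if'' direction is genuinely false with $\ell_2$. Take $A \in \mathbb{R}^{(n-1) \times n}$ to be the difference operator $(Av)_i = v_{i+1} - v_i$, whose null space is spanned by the all-ones vector $v = (1, \dots, 1)$, and choose $s$ with $\sqrt{n}/2 \leq s < n/2$ (e.g.\ $n = 100$, $s = 10$). Every null space vector is a scalar multiple of $v$, so for every $S$ of cardinality $s$ one has $\|v_S\|_1 = s < \frac{n}{2} = \frac{1}{2}\|v\|_1$; the $\ell_1$ NSP holds, hence every $s$-sparse vector is the unique $\ell_1$ minimizer. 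Yet $\frac{1}{2}\|v\|_2 = \frac{\sqrt{n}}{2} \leq s = \|v_S\|_1$, so the $\ell_2$ inequality fails. Note also that your fallback idea --- exploiting that the bound must hold for \emph{every} $S$, in particular the $s$ largest-magnitude entries --- cannot rescue the $\ell_2$ version: in this counterexample all entries of $v$ have equal magnitude, so every choice of $S$ gives the same value of $\|v_S\|_1$. So prove the proposition with $\frac{1}{2}\|v\|_1$ on the right, exactly as you did; that is both the correct statement and the one the remainder of the paper actually uses.
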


In words, the null space property means that all vectors in the null space of the measurement matrix $A$ should be sufficiently un-concentrated on any subset of its entries, or rather, sufficiently ``far" from the nonlinear set of sparse vectors. 

More recently, a variant of compressive sensing theory has been developed for the theory of \emph{partial sparse} recovery.  Although there is a rich literature on partial sparse recovery guarantees, we will use the results from \cite{bandeira2013partial} which are most closely related to the setting at hand.

We use the following definition and theorem for \emph{partial sparse recovery} from \cite{bandeira2013partial}:

\begin{definition}[Partial null space property]
A pair of matrices $A = (A_1^{l \times (n-r)}, A_2^{l\times r})$ satisfies the \emph{null space property} (NSP) of order $s-r$ for partially sparse recovery of size $n-r$ with $r \leq s$ if $A_2$ is \emph{full column rank} and if for every $v_1 \in \mathbb{R}^{n-r}\backslash \{0\}$ such that $A_1 v_1 \in {\mathcal R}(A_2),$ the range of $A_2$, and for every set $S \subset \{1,\ldots, n-r\}$ of cardinality $s-r$, the following holds
$$
\| (v_1)_S \|_1 < \frac{1}{2} \| A_1 v_1 \|_1.
$$
\end{definition}

\begin{proposition}
\label{prop_big}
A pair of matrices $A = (A_1^{l\times (n-r)}, A_2^{l\times r})$ satisfies the NSP of order $s-r$ for partially sparse recovery of size $n-r$ if and only if every $\bar{v} = (\bar{v}_1, \bar{v}_2)$, such that $\bar{v}_1 \in \mathbb{R}^{n-r}$ is $(s-r)$-sparse and $\bar{v}_2 \in \mathbb{R}^r$ such that $A_1 \bar{v}_1 + A_2 \bar{v}_2 = y$, is the unique solution to 
\begin{equation}
\label{l1partial}
\min\limits_{v'_1, v'_2} \hspace{1mm} \| v'_1 \|_1 \hspace{1mm} \emph{subject to} \hspace{1mm} A_1 v'_1 + A_2 v'_2 = y. \nonumber
\end{equation}
\end{proposition}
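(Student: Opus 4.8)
The plan is to mirror the proof of the classical null space characterization (Proposition \ref{prop:nsp}), but carefully tracking the two--block structure $A = (A_1, A_2)$ and the fact that only the first block $v_1'$ is penalized in the objective. The central observation is that if $(\bar v_1, \bar v_2)$ and $(v_1', v_2')$ are two feasible points, their difference $(w_1, w_2) = (\bar v_1 - v_1', \bar v_2 - v_2')$ lies in the kernel of $[A_1 \mid A_2]$, i.e. $A_1 w_1 = -A_2 w_2 \in \mathcal{R}(A_2)$. Thus the set of admissible ``error directions'' in the first block is precisely the set of $w_1$ quantified over in the partial NSP, which plays the role of ``$v \in \ker A$'' in Proposition \ref{prop:nsp}. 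Throughout, the relevant magnitude is the objective $\|v_1\|_1$, and the NSP bound is used in its comparison form $\|(v_1)_S\|_1 < \|(v_1)_{S^c}\|_1$.

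For sufficiency ($\Leftarrow$), I would assume the partial NSP and let $\bar v_1$ be $(s-r)$--sparse with support $S$, $|S| \le s-r$. Given any other feasible $(v_1', v_2') \neq (\bar v_1, \bar v_2)$, set $w_1 = \bar v_1 - v_1'$. If $w_1 = 0$, then $A_2 w_2 = 0$, and since $A_2$ has full column rank this forces $w_2 = 0$, contradicting distinctness of the pairs. Hence $w_1 \neq 0$, and because $A_1 w_1 \in \mathcal{R}(A_2)$ the partial NSP inequality applies to $w_1$ on $S$. Combining it with the triangle inequality exactly as in the classical argument, namely $\|\bar v_1\|_1 = \|(\bar v_1)_S\|_1 \le \|(w_1)_S\|_1 + \|(v_1')_S\|_1$, then bounding $\|(w_1)_S\|_1 < \|(w_1)_{S^c}\|_1$ and using $(w_1)_{S^c} = -(v_1')_{S^c}$, yields $\|\bar v_1\|_1 < \|v_1'\|_1$. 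So $(\bar v_1, \bar v_2)$ is the strict, hence unique, minimizer.

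For necessity ($\Rightarrow$), I would first deduce that $A_2$ is full column rank: if some $u \in \ker A_2 \setminus \{0\}$ existed, then $(0,0)$ and $(0,u)$ would both be feasible for $y = 0$ with equal objective value $0$, contradicting uniqueness for the $(s-r)$--sparse vector $\bar v_1 = 0$. Next, fix $v_1 \neq 0$ with $A_1 v_1 \in \mathcal{R}(A_2)$, so $A_1 v_1 = A_2 w_2$ for some $w_2$, and fix $S$ with $|S| = s-r$. Taking the optimal instance $\bar v_1 = (v_1)_S$, $\bar v_2 = 0$, with $y = A_1 (v_1)_S$, the pair $v_1' = -(v_1)_{S^c}$, $v_2' = w_2$ is feasible for the same $y$ (since $A_1(-(v_1)_{S^c}) + A_2 w_2 = A_1 (v_1)_S = y$) and is genuinely distinct because $(v_1)_S$ and $-(v_1)_{S^c}$ have disjoint supports while $v_1 \neq 0$. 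Uniqueness then forces $\|(v_1)_S\|_1 < \|(v_1)_{S^c}\|_1$, the required bound.

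The \emph{main obstacle}, and the essential new feature relative to Proposition \ref{prop:nsp}, is the coupling between the penalized block $v_1$ and the free, unpenalized block $v_2$: uniqueness of the whole pair $(v_1, v_2)$ does not follow from optimality of $v_1$ alone. The ``full column rank of $A_2$'' clause in the definition exists precisely to close this gap, and checking that it is both \emph{used} (sufficiency, the case $w_1 = 0$) and \emph{forced} (necessity) is the step that requires the most care. Every remaining step transcribes the classical triangle--inequality computation, with the projected kernel condition $A_1 v_1 \in \mathcal{R}(A_2)$ substituting for membership in $\ker A$.
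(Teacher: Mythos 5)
The paper does not actually prove Proposition~\ref{prop_big}: it is quoted without proof from \cite{bandeira2013partial}, so there is no in-paper argument to compare against. Your proof is correct and is essentially the standard null-space-property equivalence, properly adapted to the two-block structure. Both directions check out: in sufficiency, the difference $(w_1,w_2)$ of two feasible points satisfies $A_1 w_1 \in {\mathcal R}(A_2)$, the case $w_1=0$ is correctly dispatched by full column rank of $A_2$, and the triangle-inequality chain $\|\bar v_1\|_1 \le \|(w_1)_S\|_1 + \|(v_1')_S\|_1 < \|(w_1)_{S^c}\|_1 + \|(v_1')_S\|_1 = \|v_1'\|_1$ is valid because $(w_1)_{S^c} = -(v_1')_{S^c}$; in necessity, the test instance $\bar v_1 = (v_1)_S$, $\bar v_2 = 0$ against the competitor $(-(v_1)_{S^c},\, w_2)$ with $A_1 v_1 = A_2 w_2$ is exactly the right construction, and your derivation of full column rank from uniqueness at $y=0$ closes the only gap specific to the partial setting. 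Two minor remarks. First, in sufficiency you should enlarge the support of $\bar v_1$ to a set of cardinality exactly $s-r$ before invoking the NSP, since the definition quantifies over sets of that exact size (trivial, by monotonicity of $\|\cdot\|_1$ over subsets). Second, and more substantively: you prove the equivalence with the NSP inequality in the form $\|(v_1)_S\|_1 < \tfrac12\|v_1\|_1$ (equivalently $\|(v_1)_S\|_1 < \|(v_1)_{S^c}\|_1$), whereas the paper's Definition literally reads $\|(v_1)_S\|_1 < \tfrac12\|A_1 v_1\|_1$. As printed, that version is not equivalent to exact recovery for general $A_1$ (rescaling $A_1$ changes the right-hand side but not the recovery property), so the paper's definition contains a typo; your reading is the correct one from \cite{bandeira2013partial}, and the two coincide in the only case the paper uses, namely $A_1 = Id$ in Corollary~\ref{prop_cs}. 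It would strengthen your write-up to flag this discrepancy explicitly.
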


We will use this proposition in the particular case $A_1 = Id_{l \times l}.$   We state this special case explicitly for clarity.

\begin{corollary}
\label{prop_cs}
Every $(v, w) \in \mathbb{R}^{m + r}$ satisfying $v + A w = y$ such that $v \in \mathbb{R}^m$ is $(s-r)$-sparse is the unique solution to 
\begin{equation}
\label{l1partial+}
\min\limits_{\tilde v, \tilde w} \hspace{1mm} \| \tilde v \|_1 \hspace{1mm} \emph{subject to} \hspace{1mm} \tilde v + A \tilde w = y, \nonumber
\end{equation}
if and only if $A \in \mathbb{R}^{m \times r}$ is full column rank and for every $\tilde v \in \mathbb{R}^{m}\backslash \{0\}$ such that $\tilde v \in {\mathcal R}(A),$ the following holds  for every set $S \subset \{1,\ldots, m\}$ of cardinality $s-r$:
$$
\| \tilde v_S \|_1 < \frac{1}{2} \| \tilde v \|_1.
$$
\end{corollary}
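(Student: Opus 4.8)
The plan is to derive this corollary as the direct specialization of Proposition \ref{prop_big} to the case $A_1 = Id_{m \times m}$, rather than reproving the null-space-property machinery from scratch. First I would fix the correspondences between the general statement and the present one: take $l = m$ and $n - r = m$ in Proposition \ref{prop_big}, so that the block structure $A = (A_1^{l \times (n-r)}, A_2^{l \times r})$ becomes $A_1 = Id_{m \times m}$ and $A_2 = A \in \mathbb{R}^{m \times r}$, and relabel the unknowns as $v_1 = v = \tilde v$ (the $(s-r)$-sparse part living in $\mathbb{R}^m$) and $v_2 = w = \tilde w$ (the unconstrained part living in $\mathbb{R}^r$). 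Here the ambient dimension is $n = m + r$.

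Next I would check that the optimization problem and the recovery claim translate verbatim. Under the substitution $A_1 = Id$, the constraint $A_1 v'_1 + A_2 v'_2 = y$ is exactly $\tilde v + A \tilde w = y$, and the objective $\| v'_1 \|_1$ is exactly $\| \tilde v \|_1$; hence the minimization \eqref{l1partial} coincides with \eqref{l1partial+}, and the assertion ``every $(v,w)$ with $v$ being $(s-r)$-sparse and satisfying $v + Aw = y$ is the unique minimizer'' is precisely the conclusion of Proposition \ref{prop_big} after relabeling.

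I would then translate the partial NSP hypothesis. The requirement that $A_2$ be full column rank becomes the requirement that $A$ be full column rank. For the inequality, the key simplification is that with $A_1 = Id$ one has $A_1 v_1 = v_1 = \tilde v$, so $\| A_1 v_1 \|_1 = \| \tilde v \|_1$ and the membership condition $A_1 v_1 \in \mathcal{R}(A_2)$ reads simply $\tilde v \in \mathcal{R}(A)$. Consequently the partial null space property of order $s-r$ --- ``for all nonzero $v_1$ with $A_1 v_1 \in \mathcal{R}(A_2)$ and all $S$ of cardinality $s-r$, $\| (v_1)_S \|_1 < \tfrac{1}{2} \| A_1 v_1 \|_1$'' --- becomes exactly ``$A$ full column rank and for all nonzero $\tilde v \in \mathcal{R}(A)$ and all $S$ of cardinality $s-r$, $\| \tilde v_S \|_1 < \tfrac{1}{2} \| \tilde v \|_1$'', which is the stated condition.

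Finally, since both the optimization problem and the NSP hypothesis match their general counterparts term for term, invoking Proposition \ref{prop_big} in this special case immediately yields the asserted equivalence. I do not expect any genuine obstacle here: the only points requiring care are the bookkeeping of dimensions ($n = m+r$, $l = n - r = m$) so that $A_1 = Id_{m \times m}$ is well-defined as a square identity, and the observation that it is precisely the identity $\| A_1 v_1 \|_1 = \| \tilde v \|_1$ that collapses the general right-hand side $\tfrac{1}{2}\| A_1 v_1 \|_1$ into the $\tfrac{1}{2}\| \tilde v \|_1$ appearing here.
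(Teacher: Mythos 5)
Your proposal is correct and matches the paper's approach exactly: the paper introduces this corollary with the remark that it is just Proposition \ref{prop_big} ``in the particular case $A_1 = Id_{l\times l}$,'' and gives no further proof. Your bookkeeping ($l=m$, $n-r=m$, $A_2=A$, and $\|A_1 v_1\|_1 = \|\tilde v\|_1$ collapsing the right-hand side) is precisely the intended specialization.
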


Note that a straightforward corollary of this corollary is that exact and unique recovery still holds if we add any additional consistent linear constraints to the linear program. 
\begin{corollary}
\label{cor_cs}
Under the same conditions as above, if for some pair of matrices $(B_1, B_2)$ the solution vector $(v,w)$ satisfies additionally  $B_1 v + B_2 w = z,$ then under the same conditions $(v,w)$ is the unique minimizer to the program
\begin{equation}
\label{l1partial++}
\min\limits_{\tilde v,\tilde w} \hspace{1mm} \| \tilde v \|_1 \hspace{1mm}\emph{subject to} \hspace{1mm} \tilde v + A \tilde w = y, \hspace{1mm} B_1 \tilde v + B_2 \tilde w = z. \nonumber
\end{equation}
\end{corollary}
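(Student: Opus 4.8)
The plan is to argue by restriction of the feasible set, exploiting that appending linear constraints can only shrink the feasible region while leaving the objective $\| \tilde v \|_1$ untouched. First I would fix notation for the two feasible sets: let
\[
F = \{ (\tilde v, \tilde w) : \tilde v + A \tilde w = y \}
\]
be the feasible set of the program in Corollary~\ref{prop_cs}, and let
\[
G = \{ (\tilde v, \tilde w) : \tilde v + A \tilde w = y, \ B_1 \tilde v + B_2 \tilde w = z \}
\]
be the feasible set of the constrained program \eqref{l1partial++}. By construction $G \subseteq F$, since every point of $G$ satisfies in particular the defining constraint of $F$.

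Next I would verify that the candidate $(v,w)$ lies in $G$. By hypothesis $(v,w)$ satisfies $v + A w = y$, so $(v,w) \in F$; and the additional assumption $B_1 v + B_2 w = z$ then places it in $G$. In particular $G$ is nonempty and contains the point at which optimality is to be certified.

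The key step is to invoke Corollary~\ref{prop_cs}. Its hypotheses---namely that $A$ is full column rank and that $\| \tilde v_S \|_1 < \tfrac12 \| \tilde v \|_1$ for every nonzero $\tilde v \in {\mathcal R}(A)$ and every $S$ of cardinality $s-r$---involve only $A$, not $B_1$ or $B_2$, and are in force ``under the same conditions.'' Hence Corollary~\ref{prop_cs} certifies that $(v,w)$ is the \emph{unique} minimizer of $\| \tilde v \|_1$ over the larger set $F$: for every $(\tilde v, \tilde w) \in F$ one has $\| \tilde v \|_1 \ge \| v \|_1$, with equality only when $(\tilde v, \tilde w) = (v, w)$. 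Restricting this chain of inequalities to the subset $G \subseteq F$, and using that $(v,w) \in G$, I conclude that $(v,w)$ minimizes $\| \tilde v \|_1$ over $G$ and is the unique such minimizer, which is precisely the claim.

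I do not expect a genuine obstacle: the argument is nothing more than the observation that a strict global minimizer over a set remains the strict minimizer over any subset that still contains it. The only point deserving care is to note that the recovery certificate of Corollary~\ref{prop_cs} depends on $A$ alone, so it transfers verbatim irrespective of the appended constraint pair $(B_1, B_2)$.
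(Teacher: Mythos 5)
Your proof is correct and matches the paper's intent: the paper gives no explicit proof, simply calling the result a ``straightforward corollary,'' and the intended argument is exactly your observation that adding consistent constraints shrinks the feasible set while keeping $(v,w)$ feasible, so the strict minimizer over the larger set remains the strict minimizer over the subset.
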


\subsection{Statistical Behavior of Lorenz-like Systems}
In this section, we are interested in 3D dynamical systems, i.e., the dimension of the system is $d=3$. We first recall the following notation.  We denote by $C^{1+\eta}(\Omega)$ the Holder space consisting of those functions $f$ having continuous derivative up to order 1 and such that all partial derivatives are Holder continuous with exponent $\eta$: 
\[
\| f \|_{C^{1+\eta}(\Omega)} := \max_{| \beta| \in \{0,1\} } \sup_{x \in \Omega} | D^{\beta} f(x) | +  \max_{|\beta| = 1} \sup_{x \neq y \in \Omega}  \frac{ | D^{\beta} f(x) - D^{\beta} f(y)|}{|x-y|^{\eta} },
\]
 where $\beta$ ranges over multi-indices with $|\beta| = \sum\limits_i |\beta_i|$.

Recall the well-known classical \emph{Lorenz equations}: 
\begin{eqnarray}
\label{eq:lorenz}
\left. \begin{array}{l}
\dot{x_1} = a\, (x_2 - x_1) \\
\dot{x_2} = \gamma x_1 - x_2 - x_1x_3  \\
\dot{x_3} = x_1x_2 - b\, x_3, \\
\end{array} \right.
\quad 
\left. \begin{array}{l}
a = 10 \\
\gamma = 28 \\
b = 8/3  \\
\end{array} \right.
\end{eqnarray}
Note that the Lorenz equations are of the form $\dot{x}(t) = f(x(t))$ with polynomial governing equations $f$ of the form \eqref{eq:poly} of degree $p=2$.   With slight abuse of notation, we will go back and forth also between the notation $\dot{X^t} = f(X^t)$.  

The author in \cite{lorenz1963deterministic} introduced these equations as a simplified model for weather forecast, and numerical simulations indicated that in an open neighborhood of the chosen parameters, almost all points in phase space tend to a chaotic attractor.  One property of chaotic systems is \emph{sensitive dependence on initial conditions}, which implies that long term predictions based on such models are infeasible; on the other hand, another property of chaos is that the \emph{statistical} behavior of such systems is understandable, and chaotic systems share many of the same statistical properties of random sequences. The Lorenz attractor, while easy to visualize numerically, has proved extremely difficult to analyze rigorously.  A proof of existence of the Lorenz attractor was only provided only in 1999 \cite{tucker1999lorenz}, incorporating a computer-aided proof. Precisely, Tucker proved that the Lorenz equations \eqref{eq:lorenz} support a compact, connected attractor $\Lambda$ and the flow admits a unique so-called ``physical" measure $\mu$ with $\text{supp}(\mu) = \Lambda$.  An invariant probability measure $\mu$ for a flow $X^t:=(x_1(t), x_2(t),x_3(t))$ on a compact Riemannian manifold $M$ is called physical if the \emph{basin} of $\mu$, $B(\mu)$ has positive Lebesgue measure.  Recall that $B(\mu)$ is the set of points $z \in M$ satisfying for all continuous functions $\psi: M \rightarrow \mathbb{R}$
\begin{equation}
\label{physical}
\lim_{T \rightarrow \infty} \frac{1}{T} \int_{0}^T \psi(X^t(z)) dt = \int_{\Lambda} \psi(z) d\mu(z).
\end{equation}
Roughly speaking, the existence of a physical measure for an attractor means that most points in a neighborhood of the attractor have well defined long term statistical behavior.  For the Lorenz equations \eqref{eq:lorenz}, and more generally, for any so-called \emph{geometric Lorenz-like system} (see \cite{araujo2014statistical} for definition and properties) of a flow on a three-dimensional manifold, the ergodic basin $B(\mu)$ covers a full Lebesgue measure subset of the topological basin of attraction $\Lambda$. 

Property \eqref{physical} of a physical measure shows that asymptotically, the time average of a continuous observable of the flow equals its space average.  It is natural to ask more quantitatively for the \emph{rate of convergence} of the time averages to the space average and moreover, if such ergodicity also holds for the time-1 map for the flow, given by the discrete sequence $\{ X^j \}_{j \in \mathbb{Z}}$.  Following several results in this direction for certain classes of so-called geometric Lorenz attractors which include the Lorenz equations \eqref{eq:lorenz}, we state one of the most recent results in this direction.   Combining Theorems 5.2 and Theorem 7.1 from \cite{araujo2015rapid} with Corollary 2.2 of \cite{araujo2015exponential}, one can derive estimates on the rate of mixing in \eqref{physical} for the discrete time-1 map  in the form of an \emph{almost sure invariance principle} (ASIP) for the time-1 map of flows generated by the Lorenz equations \ref{eq:lorenz} and, more generally, for the time-1 map of a vector field belonging to the following (quite technical) class: 

\begin{definition}
\label{C0}
 Denote by ${\mathcal U}$ the class of $C^{1+\eta}$ uniformly hyperbolic skew product flows, subject to a uniform nonintegrability condition, as defined by the condition UNI and properties (i)-(iv) from \cite{araujo2015exponential}.   This class includes the classical Lorenz attractor, and an open set of coefficients around the classical coefficients.
 \end{definition}

\begin{proposition}[ASIP for time-1 maps]
\label{CLT_main}
Fix $\eta > 0$.  Let $X^t$ be the flow generated by a vector field $G \in {\mathcal U}$ starting from $X^0 = x \in \Lambda$, and consider its time-1 map 
$ \{X^0,X^1, X^2, \ldots,X^m \}$.  Let $\psi: \mathbb{R}^3 \rightarrow \mathbb{R}$ be a $C^{1+\eta}$ function, and let $Z$ be a standard normal random variable.  Then there is a universal constant $C_1 > 0$ and a constant $C_{x,  \psi} \geq 0$ such that
\begin{align}
 \left| \frac{1}{m} \sum_{j=0}^{m-1} \psi(X^j) - \int_{\Lambda} \psi(z) \hspace{.5mm} d\mu(z) - \frac{\sigma}{\sqrt{m}} Z  \right| &\leq  C_{x, \psi} m^{-3/4} (\log(m))^{1/2} (\log \log(m))^{1/4},  \nonumber \\
 &\quad \text{for } \mu \text{-almost all }  x \in \Lambda, \nonumber
\end{align}
and the variance is bounded by $\sigma^2 \leq C_1 \| \psi \|^2_{C^{1+\eta}(\Lambda)}$.
\end{proposition}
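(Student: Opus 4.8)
The statement is essentially a packaging of known results on the statistical behavior of geometric Lorenz flows, so the plan is to assemble the cited theorems and then perform the elementary rescaling that converts an \emph{almost sure invariance principle} (ASIP) for Birkhoff sums into the displayed bound. Write $\bar\psi = \int_\Lambda \psi \, d\mu$ and $S_m = \sum_{j=0}^{m-1}(\psi(X^j) - \bar\psi)$. After multiplying through by $m$, the target inequality is equivalent to controlling $|S_m - \sigma W(m)|$, where $W$ is a standard Brownian motion on a suitable enlarged probability space, because for each fixed $m$ the random variable $W(m)/\sqrt{m}$ is exactly a standard normal $Z$, so that $\frac{\sigma}{m}W(m) = \frac{\sigma}{\sqrt{m}}Z$. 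Thus it suffices to prove an ASIP for the time-1 map with error term $O\bigl(m^{1/4}(\log m)^{1/2}(\log\log m)^{1/4}\bigr)$ together with the variance bound.

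First I would invoke the structural results for the class ${\mathcal U}$. By Theorems 5.2 and 7.1 of \cite{araujo2015rapid}, a vector field $G \in {\mathcal U}$ --- in particular the classical Lorenz flow --- admits a modeling of its time-1 map as a nonuniformly hyperbolic system with exponential control on the return-time tails, which is exactly the hypothesis feeding the abstract ASIP machinery; the uniform nonintegrability (UNI) condition built into the definition of ${\mathcal U}$ is what upgrades mere mixing to the quantitative rates needed here. Next I would apply Corollary 2.2 of \cite{araujo2015exponential}, which yields the ASIP for H\"older observables of such a system, with the precise error exponent $m^{1/4}(\log m)^{1/2}(\log\log m)^{1/4}$ for $\mu$-almost every starting point $x$. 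Since $\psi \in C^{1+\eta}$ is in particular $\eta$-H\"older, the corollary applies directly to $\psi - \bar\psi$, and the starting-point and observable dependence is absorbed into the constant $C_{x,\psi}$.

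For the variance bound I would use the Green--Kubo expression for the ASIP variance,
$$
\sigma^2 = \sum_{n=-\infty}^{\infty} \int_\Lambda (\psi - \bar\psi)\,(\psi\circ f^{n} - \bar\psi)\, d\mu,
$$
where $f$ denotes the time-1 map. Exponential decay of correlations for $C^{1+\eta}$ observables --- again a consequence of the class ${\mathcal U}$ via the cited references --- gives $\bigl| \int_\Lambda (\psi-\bar\psi)(\psi\circ f^n - \bar\psi)\,d\mu \bigr| \leq C\,\theta^{|n|}\,\|\psi\|^2_{C^{1+\eta}(\Lambda)}$ for constants $C>0$ and $\theta \in (0,1)$ depending only on $G$. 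Summing the geometric series yields $\sigma^2 \leq \frac{C(1+\theta)}{1-\theta}\|\psi\|^2_{C^{1+\eta}(\Lambda)} =: C_1 \|\psi\|^2_{C^{1+\eta}(\Lambda)}$, and crucially $C_1$ is independent of $\psi$ because $C$ and $\theta$ are.

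The main obstacle I anticipate is the bookkeeping at the interface between the flow and its time-1 map: the cited ASIP and decay-of-correlations statements must be read for the discrete time-1 map rather than for the continuous flow or for the Poincar\'e return map, and one must check that the observable class and the norm dependence quoted there are compatible with $C^{1+\eta}(\Lambda)$ under a single universal decay rate $\theta$. This universality is precisely what makes the constant $C_1$ in the variance bound independent of $\psi$, which is the feature that will matter when the proposition is applied simultaneously to all the monomial observables appearing in the dictionary $\Phi(X)$.
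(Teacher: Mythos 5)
Your proposal is correct and takes essentially the same route as the paper, which offers no proof of this proposition beyond the sentence preceding it (``Combining Theorems 5.2 and Theorem 7.1 from \cite{araujo2015rapid} with Corollary 2.2 of \cite{araujo2015exponential}, one can derive\dots''); your rescaling of the Brownian-motion ASIP via $W(m)/\sqrt{m}\sim\mathcal{N}(0,1)$ and the Green--Kubo summation under exponential decay of correlations are exactly the routine details the paper leaves implicit.
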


\bigskip

\begin{remark}
Note that the ASIP \emph{implies} the Central Limit Theorem (CLT): in the same setting as above, 
$$
 \frac{1}{\sqrt{m}} \left( \sum_{j=0}^{m-1} \psi( X^j)-  m \int_{\Lambda} \psi(x) d\mu(x)  \right)  \longrightarrow 
{\mathcal N}(0, \sigma^2) \quad\text{as}\quad m\rightarrow\infty,
$$ 
where the convergence is in distribution.
\end{remark}

For our purposes, it will be useful to state a more uniform version of Proposition \ref{CLT_main}.  
Consider the function
\begin{equation}
\label{uniform_asip}
F_{\tau, \eta}(x) = \sup_{\psi: \| \psi \|_{C^{1+\eta}} \leq \tau} C_{x, \psi}.
\end{equation}
By Proposition \ref{CLT_main}, $F_{\tau}(x)$ is finite for $\mu$-almost all $x$. 
Thus, given any $\varepsilon > 0$, there exists a constant $\kappa_{\varepsilon, \tau, \eta}$ and a subset $\Lambda_{\varepsilon, \tau, \eta} \subset \Lambda$ of measure $\mu(\Lambda_{\varepsilon, \tau, \eta}) \geq 1 - \varepsilon$ such that $F_{\tau, \eta}(x) \leq \kappa_{\varepsilon, \tau, \eta}$ uniformly for all $x \in \Lambda_{\varepsilon, \tau, \eta}$. In terms of this constant, we can state the following corollary.

\begin{corollary}
\label{CLT_main2}
Fix $\eta > 0$ and $\varepsilon > 0$.   Let $X^t$ be the flow generated by a vector field $G \in {\mathcal U}$.  Draw $x \in \Lambda$ from the measure $d\mu$, and consider the flow $X^t$ generated by such a vector field originating at $X^0 = x$, and its time-1 map 
$ \{X^0, X^1, X^2, \dots \}$.   There is a universal constant $C_1 > 0$ and a constant $\kappa_{\varepsilon, \tau, \eta} \geq 0$ such that with probability exceeding $1 - \varepsilon$ with respect to the draw of $x$, the following holds uniformly over all $\psi: \mathbb{R}^3 \rightarrow \mathbb{R}$ satisfying $\| \psi \|_{C^{1+\eta}} \leq \tau$:  
\begin{align}
 \left| \frac{1}{m} \sum_{j=0}^{m-1} \psi(X^j) - \int_{\Lambda} \psi(z) \hspace{.5mm} d\mu(z) -  \frac{\sigma}{\sqrt{m}} Z \right| &\leq \kappa_{\varepsilon, \tau, \eta}  m^{-3/4} (\log(m))^{1/2} (\log \log(m))^{1/4}, \nonumber 
\end{align}
and the variance is bounded by $\sigma^2 \leq C_1 \tau^2$.
\end{corollary}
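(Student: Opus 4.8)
The plan is to read Corollary \ref{CLT_main2} as a \emph{uniformization} of Proposition \ref{CLT_main}: I must upgrade the statement ``for each fixed observable $\psi$ and $\mu$-almost every $x$ there is a finite constant $C_{x,\psi}$'' into ``with probability at least $1-\varepsilon$ over the draw of $x$ there is a single constant $\kappa_{\varepsilon,\tau,\eta}$ controlling the ASIP error simultaneously for all $\psi$ in the ball $\|\psi\|_{C^{1+\eta}} \leq \tau$.'' The vehicle is the envelope function $F_{\tau,\eta}$ defined in \eqref{uniform_asip}, and essentially all the work sits in showing that $F_{\tau,\eta}(x) < \infty$ for $\mu$-almost every $x$; everything after that is a routine measure-theoretic thresholding argument, and the variance bound $\sigma^2 \leq C_1\|\psi\|_{C^{1+\eta}(\Lambda)}^2 \leq C_1\tau^2$ is immediate from the norm constraint.

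First I would establish the a.e.\ finiteness (and measurability) of $F_{\tau,\eta}$. The structural fact to exploit is that the quantity bounded in Proposition \ref{CLT_main}, namely $\frac1m\sum_{j=0}^{m-1}\psi(X^j) - \int_\Lambda \psi\,d\mu - \frac{\sigma}{\sqrt m}Z$, is positively homogeneous of degree one in $\psi$: the empirical-minus-ergodic average is linear in $\psi$, while the asymptotic variance is a quadratic form, so $\sigma_{c\psi} = |c|\,\sigma_\psi$, and hence scaling $\psi \mapsto c\psi$ with $c>0$ scales the whole expression by $c$. This reduces the ball-supremum to $\tau$ times the sphere-supremum $\sup_{\|\phi\|_{C^{1+\eta}}=1} C_{x,\phi}$. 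The essential additional input, which is how the constant is built in the cited ASIP of \cite{araujo2015rapid,araujo2015exponential}, is that $C_{x,\psi}$ depends on $\psi$ only through its norm up to an $x$-dependent, $\psi$-independent prefactor, i.e.\ $C_{x,\psi} \leq \tilde{C}_x\,\|\psi\|_{C^{1+\eta}}$ with $\tilde{C}_x$ finite for $\mu$-a.e.\ $x$. Combining this with the homogeneity yields $F_{\tau,\eta}(x) \leq \tau\,\tilde{C}_x < \infty$ a.e., and measurability of $F_{\tau,\eta}$ follows since it coincides (up to the scalar $\tau$) with the measurable envelope $\tilde{C}_x$.

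With $F_{\tau,\eta}$ finite $\mu$-a.e.\ and measurable, the remaining step is the thresholding already sketched in the text preceding the statement. Its super-level sets satisfy $\mu(\{x : F_{\tau,\eta}(x) > t\}) \to 0$ as $t \to \infty$, so for the given $\varepsilon > 0$ there is a threshold $\kappa_{\varepsilon,\tau,\eta}$ with $\mu(\{x : F_{\tau,\eta}(x) > \kappa_{\varepsilon,\tau,\eta}\}) \leq \varepsilon$. Setting $\Lambda_{\varepsilon,\tau,\eta} = \{x : F_{\tau,\eta}(x) \leq \kappa_{\varepsilon,\tau,\eta}\}$ gives $\mu(\Lambda_{\varepsilon,\tau,\eta}) \geq 1-\varepsilon$. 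Drawing $x \sim \mu$, the event $x \in \Lambda_{\varepsilon,\tau,\eta}$ then has probability at least $1-\varepsilon$, and on this event $C_{x,\psi} \leq F_{\tau,\eta}(x) \leq \kappa_{\varepsilon,\tau,\eta}$ for every admissible $\psi$ at once; substituting this uniform bound into Proposition \ref{CLT_main} delivers the claimed inequality uniformly over $\|\psi\|_{C^{1+\eta}} \leq \tau$.

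I expect the main obstacle to be precisely the a.e.\ finiteness of the envelope $F_{\tau,\eta}$, because the exceptional null set in Proposition \ref{CLT_main} depends on $\psi$ and the supremum ranges over an uncountable family, so a naive union-of-null-sets argument fails. A purely topological fix by passing to a countable $C^1$-dense subfamily does \emph{not} suffice either, since the sup-norm approximation error between nearby observables is a constant that does not decay at the ASIP rate $m^{-3/4}(\log m)^{1/2}(\log\log m)^{1/4}$; this is exactly why one is forced to use the degree-one homogeneity together with the linear dependence of the ASIP constant on $\|\psi\|_{C^{1+\eta}}$ supplied by the cited works. Once that finiteness is secured, the conversion from ``almost everywhere'' to ``with probability $\geq 1-\varepsilon$'' is elementary.
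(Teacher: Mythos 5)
Your proposal follows essentially the same route as the paper: define the envelope $F_{\tau,\eta}(x)=\sup_{\|\psi\|_{C^{1+\eta}}\le\tau}C_{x,\psi}$ as in \eqref{uniform_asip}, argue it is finite for $\mu$-almost every $x$, and then choose the threshold $\kappa_{\varepsilon,\tau,\eta}$ so that the super-level set has $\mu$-measure at most $\varepsilon$. You are in fact more careful than the paper at the one delicate point: the paper simply asserts that $F_{\tau,\eta}$ is finite a.e.\ ``by Proposition \ref{CLT_main}'', whereas you correctly note that the $\psi$-dependent null sets over an uncountable family make this nontrivial and supply the missing ingredient (degree-one homogeneity together with the linear dependence of the ASIP constant on $\|\psi\|_{C^{1+\eta}}$ from the cited works).
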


For any $G \in {\mathcal U}$, the corresponding attractor $\Lambda$ will be compact, \textit{i.e.}, there exists some finite $B_{\Lambda} > 0$ such that 
$$\max_{(x,y,z) \in \Lambda} \{ | x |, |y|, |z| \} \leq B_{\Lambda}.$$
For the Lorenz equations \eqref{eq:lorenz} in particular, $\Lambda$ is a fractal, and has measured Hausdorff dimension $2.06 \pm .01$ \cite{viswanath2004fractal}.

\subsection{Recovering Polynomial Dynamics from Chaotic Data}
Fix number of measurements $m$. Fix $\Theta$ and $\Theta' \in \mathbb{R}^{m \times 3}$, arrays of sparse corruptions such that 
$\| \Theta(:,l) \|_0, \| \Theta'(:,l)  \|_0 \leq s$, and are uniformly bounded,
\begin{equation}
\label{noise_bound}
\sup_j  \left\{ \| \Theta(j,l) \|_{\infty},\| \Theta'(j,l) \|_{\infty} \right\}  \leq B_{\Theta}, \quad \quad l = 1,2,3.
\end{equation}
Suppose that we observe corrupted iterations of the time-1 map of a flow $X^t = (x_1(t), x_2(t), x_3(t))$ satisfying the conditions of Corollary \ref{CLT_main2}:
\begin{equation}
\label{observe}
\text{Given:} \quad U^t = X^t + \Theta_t, \quad V^t =  \dot{X}^t + \Theta'_t,  \quad \quad  t = 0, 1, 2, \dots.
\end{equation}

Our measurements $U^j = (u_1(j) ,u_2(j), u_3(j))$ and $V^j = (v_1(j), v_2(j), v_3(j))$ satisfy the linear equations 
\begin{equation}
\label{recast}
V- \Phi {\mathcal C}  = {\mathcal E},
\end{equation}
where 
\begin{itemize}
\item $\Phi$ has rows $\Phi(j,:) = (1, u_1(j), u_2(j), u_3(j), u_1(j) u_2(j), \dots )$
\item ${\mathcal E} = (e_1, e_2, e_3) \in \mathbb{R}^{m \times 3}$ has a sparse number of nonzero rows, with $\| {\mathcal E} \|_0 \leq 6s$,
\item ${\mathcal C} = (c_1, c_2, c_3 ) \in \mathbb{R}^{r \times 3}$ is the matrix of polynomial coefficients.
\end{itemize}

\bigskip
Notice that by turning the matrices $V, {\mathcal C}, {\mathcal E}$ into tall column vectors $v, c, e$ and turning $\Phi$ into the augmented matrix  
$$
A=\left[ \begin{array} {ccc}
\Phi, 0, 0 \\
0, \Phi, 0 \\
0, 0, \Phi
\end{array} \right],
$$ 
we can equivalently write equation \eqref{recast} as $v- A c  = e$.
The range space ${\mathcal R}(A)$ corresponds to vectors $ A c = (\Phi c_1, \Phi c_2, \Phi c_3) \in \mathbb{R}^{3m\times 3}$ of the form
\begin{eqnarray}
\label{range}
( \Phi c_l )_j &=& \sum_{\alpha: | \alpha | \leq p} c_l(\alpha)  u_1(j)^{\alpha_1} u_2(j)^{\alpha_2} u_3(j)^{\alpha_3} \nonumber \\
&=& \sum_{\alpha: | \alpha | \leq p} c_l(\alpha) x_1(j)^{\alpha_1} x_2(j)^{\alpha_2} x_3(j)^{\alpha_3} + R_l(j), \quad \quad l = 1,2,3,
\end{eqnarray}
where $\| R_l \|_0 \leq 2s$ and, in light of the assumption \eqref{noise_bound}, is bounded by
\begin{equation}
\label{bound_residual}
|R_l(j)| \leq  (B_{\Theta}+B_{\Lambda})^d \| c_l \|_1, \quad \quad l = 1,2,3.
\end{equation}

\begin{lemma}
\label{lemma_lower}
Let $\eta \geq 0$.  Suppose that the underlying attractor $\Lambda$ for the flow at hand has Hausdorff dimension greater than two, and consider the function $\psi^c = \psi^{(c_1, c_2, c_3)}: \mathbb{R}^3 \rightarrow \mathbb{R}$ given by 
$$
\psi^c(x) = \sum_{l=1}^3 \Bigl | \sum_{\alpha: | \alpha | \leq p} c_l(\alpha) x_1^{\alpha_1} x_2^{\alpha_2} x_3^{\alpha_3}\Bigr | ^{1+\eta} .
$$   
 Then
$$
\inf_{c \in \mathbb{R}^{3r}: \| c \|_1 = 1} \int_{\Lambda} \psi^c(x)  d\mu(x) \geq D > 0.
$$
\end{lemma}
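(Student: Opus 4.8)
The plan is to prove the bound in two stages: first establish strict positivity of the integral for each fixed nonzero coefficient vector, and then upgrade this to a uniform lower bound over the $\ell_1$ sphere by a compactness argument. First I would fix $c$ with $\| c \|_1 = 1$ and write $\psi^c(x) = \sum_{l=1}^3 |P_l(x)|^{1+\eta}$, where $P_l(x) = \sum_{\alpha: |\alpha| \leq p} c_l(\alpha)\, x_1^{\alpha_1} x_2^{\alpha_2} x_3^{\alpha_3}$ is a real polynomial on $\mathbb{R}^3$. Since the monomials are linearly independent as functions and $\| c \|_1 = 1$ forces at least one coefficient block $c_l$ to be nonzero, at least one $P_l$ is a nonzero polynomial. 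As the three summands are nonnegative, it suffices to show $\int_\Lambda |P_l|^{1+\eta}\, d\mu > 0$ for any nonzero polynomial $P_l$.

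The core step establishes this positivity by contradiction, using the Hausdorff dimension hypothesis. Suppose $\int_\Lambda |P_l|^{1+\eta}\, d\mu = 0$. Because the integrand is continuous and nonnegative and $1 + \eta > 0$, this forces $P_l = 0$ $\mu$-almost everywhere, so $\mu$ is carried by the closed zero set $Z = \{ x \in \mathbb{R}^3 : P_l(x) = 0 \}$. Since $Z$ is closed with $\mu(Z) = 1$, its complement is an open null set, hence $\supp(\mu) \subseteq Z$; but $\supp(\mu) = \Lambda$ for the physical measure of the geometric Lorenz flow, so $\Lambda \subseteq Z$. The zero set of a nonzero polynomial in three real variables is a real algebraic hypersurface and therefore has Hausdorff dimension at most two, which contradicts the standing assumption $\dim_H \Lambda > 2$. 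Consequently $\int_\Lambda |P_l|^{1+\eta}\, d\mu > 0$, and therefore $\int_\Lambda \psi^c(x)\, d\mu(x) > 0$ for every $c$ on the $\ell_1$ sphere.

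It then remains to pass from pointwise positivity to a uniform lower bound. Here I would use that $\Lambda$ is compact, so every monomial, and hence each $P_l$, is uniformly bounded on $\Lambda$; a dominated convergence (equivalently, uniform continuity) argument then shows that the map $c \mapsto \int_\Lambda \psi^c(x)\, d\mu(x)$ is continuous on $\mathbb{R}^{3r}$. Since the set $\{ c : \| c \|_1 = 1 \}$ is compact and this map is strictly positive on it, the map attains a strictly positive minimum, which furnishes the constant $D > 0$ and shows the infimum is in fact achieved.

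The main obstacle is the core positivity step, namely the interplay between measure theory and real algebraic geometry. The two essential inputs are the support identity $\supp(\mu) = \Lambda$ (guaranteed for the class $\mathcal{U}$, via Tucker's result for the classical coefficients) and the elementary but crucial fact that the zero locus of a nonzero real polynomial in $\mathbb{R}^3$ has Hausdorff dimension at most two; together these rule out concentration of $\mu$ on an algebraic hypersurface precisely because $\dim_H \Lambda > 2$. By contrast, the continuity and compactness argument delivering uniformity is entirely routine.
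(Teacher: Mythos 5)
Your proposal is correct and follows essentially the same route as the paper: positivity of $\int_\Lambda \psi^c\,d\mu$ for each fixed nonzero $c$ via the fact that a set of Hausdorff dimension greater than two cannot lie in the zero locus of a nonzero polynomial, followed by compactness of the $\ell_1$ sphere and the extreme value theorem to get a uniform constant $D>0$. You merely spell out the details the paper leaves implicit (the support identity $\supp(\mu)=\Lambda$, the dimension bound on real algebraic hypersurfaces, and the continuity of $c\mapsto\int_\Lambda\psi^c\,d\mu$), which is a faithful elaboration rather than a different argument.
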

\begin{proof}
Since $\Lambda$ has Hausdorff dimension strictly greater than 2, it is not contained in the zero set of any algebraic polynomial.  
Thus, for any fixed nonzero $c \in \mathbb{R}^{3r}$,
$$
G(c) :=  \int_{\Lambda} \sum_{j=1}^m \bigl | (A_x c)_j \bigr |^{1+\eta} d\mu(x) > 0.
$$
Since the set $c \in \mathbb{R}^{3r}: \| c \|_1 =1$ is compact and non-empty, we may apply the extreme value theorem: any continuous real-valued function over the space is bounded below and attains its infimum.  In particular, this implies
$$
\inf_{c \in \mathbb{R}^{3r}: \| c \|_1 = 1}  G(c) > 0.
$$
\end{proof}

\begin{theorem}[Main theorem]
\label{thm_main}
Fix $\eta > 0, \varepsilon > 0$, and maximal degree $p$.  Let $X^t=x(t) =(x_1(t),x_2(t),x_3(t))$ be the flow generated by a vector field $G \in {\mathcal U}$ whose governing equation $f: \mathbb{R}^3 \rightarrow \mathbb{R}^3$ in $\dot{x}(t) = f(x(t))$ is a multivariate algebraic polynomial of degree at most $p$,  and suppose we observe corrupted measurements of the time-1 map
$$U^t = X^t + \Theta_t, \quad V^t = \dot{X}^t + \Theta_t', \quad \quad t = 0, 1,2, \ldots, m $$
where $(\Theta, \Theta') \in \mathbb{R}^{6m}$ is sparse such that $\| (\Theta, \Theta') \|_0 \leq 2s$, and the sparse level $s$ is greater or equal than ${p+d \choose d}$, the maximal number of monomials of degree at most $p$. Also, assume that the underlying attractor for the flow has Hausdorff dimension greater than two.

 Then there are constants $C, C'$ depending only on  $\Lambda$, $p$, $B_{\Theta}$, $\eta$, and $\varepsilon$ such that if
$$
m \geq C, \quad s \leq C' m^{1/(1+\eta)},
$$ 
then the following holds with probability exceeding $1 - \varepsilon - e^{-d^3 \log(3m)}$ with respect to the initial condition $X^0 = x \sim d\mu$:
The polynomial coefficients of $f$, as well as the outlier vectors $(\Theta, \Theta')$, can be exactly recovered from the unique solution to the partial $\ell_1$-minimization problem
$$
\min\limits_{c, e} \hspace{1mm} \| e \|_1 \hspace{1mm} \emph{subject to} \hspace{1mm} v - A c = e.
$$
\end{theorem}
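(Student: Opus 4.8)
The plan is to recognize the recovery problem as an instance of partial sparse recovery and apply Corollary \ref{prop_cs} with the block matrix $A$, the sparse corruption-plus-residual vector $e$, and the coefficient vector $c$. The true pair $(e^\ast, c^\ast)$, in which $c^\ast$ stacks the true polynomial coefficients and $e^\ast$ stacks $\mathcal{E} = V - \Phi(U)\mathcal{C}$, is feasible for $v - Ac = e$, and by the decomposition \eqref{range}--\eqref{bound_residual} the vector $e^\ast$ is $O(s)$-sparse (its support lies in the union of the corrupted rows of $\Theta$ and $\Theta'$). By Corollary \ref{prop_cs} it then suffices to establish two things: that $A$ has full column rank, and that the \emph{partial null space property} holds at sparsity level $\gtrsim s$, i.e.\ $\| \tilde e_S \|_1 < \tfrac12 \| \tilde e \|_1$ for every nonzero $\tilde e \in \mathcal{R}(A)$ and every index set $S$ of the relevant cardinality. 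The hypothesis $s \ge \binom{p+d}{d}$ is exactly what makes the partial-recovery indexing ($s-r \ge 0$) meaningful.

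By homogeneity both remaining claims reduce to a single estimate on $\tilde e = Ac$ with $\|c\|_1 = 1$, combining a deterministic upper bound with a statistical lower bound. First, since $\max_i |u_i(j)| \le B_\Lambda + B_\Theta$ and $\|c\|_1 = 1$, every entry obeys $|\tilde e_{(l,j)}| = |(\Phi(U)c_l)_j| \le (B_\Lambda + B_\Theta)^p =: M_0$, so $\| \tilde e_S \|_1 \le |S|\,\|\tilde e\|_\infty \le |S|\,M_0$. Second, to bound $\|\tilde e\|_1$ from below I would use the smoothed observable $\psi^c(x) = \sum_l \bigl| \sum_\alpha c_l(\alpha) x^\alpha \bigr|^{1+\eta}$, which is $C^{1+\eta}$ precisely because $t \mapsto |t|^{1+\eta}$ is — this is where $\eta > 0$ is essential and why the exponent $1+\eta$ appears throughout. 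Using $\|\tilde e\|_1 \ge \|\tilde e\|_{1+\eta}$ and restricting the sum defining $\|\tilde e\|_{1+\eta}^{1+\eta}$ to the uncorrupted (bulk) rows, where $R_l(j)=0$ and $(\Phi(U)c_l)_j = (\Phi(X)c_l)_j$, yields
\[
\| \tilde e \|_1^{1+\eta} \ge \sum_{j=0}^{m-1} \psi^c(X^j) - \sum_{j \in \mathrm{corrupt}} \psi^c(X^j).
\]
The first sum is controlled by the uniform CLT of Corollary \ref{CLT_main2} together with the positivity of the space average from Lemma \ref{lemma_lower} ($\int_\Lambda \psi^c\, d\mu \ge D > 0$): for $m$ large the fluctuation terms drop below $D/2$, so this sum is $\ge mD/2$. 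The second sum is at most $O(s)\cdot 3 M_0^{1+\eta}$ since there are $O(s)$ corrupted rows and $\psi^c \le 3 M_0^{1+\eta}$ pointwise on $\Lambda$. Provided $s \le c_0 m$, I conclude $\|\tilde e\|_1 \gtrsim m^{1/(1+\eta)}$.

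Combining the two estimates, the null space property $\| \tilde e_S \|_1 < \tfrac12 \| \tilde e \|_1$ holds as soon as $|S|\,M_0 < \tfrac12 (D/4)^{1/(1+\eta)} m^{1/(1+\eta)}$, that is, for $|S| \le C' m^{1/(1+\eta)}$, which is exactly the claimed sparsity threshold. Full column rank of $A$ falls out of the same lower bound (if $Ac = 0$ with $c \ne 0$ then $\|\tilde e\|_1 = 0$, contradicting the positive lower bound), which recovers the full-rank statement of Corollary \ref{cor:fullrank} as a by-product. Exact and unique recovery of $(e^\ast, c^\ast)$ then follows from Corollary \ref{prop_cs}; recovering $c^\ast$ reconstructs $f$ exactly, after which $e^\ast = v - A c^\ast$ exposes the corrupted times and hence $(\Theta, \Theta')$.

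I expect the main obstacle to be \emph{uniformity over the continuum} of coefficient vectors $c$ with $\|c\|_1 = 1$: Corollary \ref{CLT_main2} supplies the lower bound $\tfrac1m \sum_j \psi^c(X^j) \ge D/2$ for a \emph{fixed} observable, whereas the null space property must hold simultaneously for all $\tilde e = Ac$. I would resolve this with a covering argument — the family $\{ \psi^c : \|c\|_1 = 1 \}$ lies in a fixed $C^{1+\eta}$-ball (bounded degree, bounded coefficients, compact $\Lambda$), so the uniform CLT applies on a net, and continuity of $c \mapsto \tfrac1m \sum_j \psi^c(X^j)$ extends the bound to the whole sphere. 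The union bound over a net of cardinality polynomial in $m$, of order $(3m)^{d^3}$, is what produces the extra failure probability $e^{-d^3 \log(3m)}$, while the $\varepsilon$ is inherited from the draw of $x$ into the good set $\Lambda_{\varepsilon, \tau, \eta}$ of Corollary \ref{CLT_main2}. The residual bookkeeping — separating bulk from corrupted rows and bounding $R_l$ via \eqref{bound_residual} — is routine.
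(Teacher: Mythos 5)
Your proposal is correct and follows essentially the same route as the paper's proof: the same reduction to the partial null space property via Corollary \ref{prop_cs}, the same smoothed observable $\psi^c$ combined with Lemma \ref{lemma_lower} and the inequality $\|\cdot\|_1 \geq \|\cdot\|_{1+\eta}$, the same $\ell_1$-net covering argument with Gaussian union bound producing the $e^{-d^3\log(3m)}$ term, and the same deterministic upper bound $\|\tilde e_S\|_1 \lesssim s(B_\Lambda+B_\Theta)^p$. The only (immaterial) difference is bookkeeping: you discard the corrupted rows before applying the ergodic lower bound, whereas the paper applies it to the full clean sum and then subtracts the residual terms $R_l$ via \eqref{bound_residual}.
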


We turn to the proof of Theorem \ref{thm_main} shortly.  First, we provide a corollary of the theorem in case we observe measurements of the time-$\Delta$ map, for $\Delta < 1$.

\begin{corollary}
\label{cor_main}
Fix $\eta > 0, \varepsilon > 0$, $L \in \mathbb{N}$ such that $\Delta = \frac{1}{L}$, and degree $p$.  Let $X^t=x(t) =(x_1(t),x_2(t),x_3(t))$ be the flow generated by a vector field $G \in {\mathcal U}$ whose governing equation $f: \mathbb{R}^3 \rightarrow \mathbb{R}^3$ in $\dot{x}(t) = f(x(t))$ is a multivariate algebraic polynomial of degree at most $p$, and suppose we observe corrupted measurements of the time-$\Delta$ map
$$U^{j} = X^{\Delta j} + \Theta_j, \quad V^{j} = \dot{X}^{\Delta j} + \Theta_j', \quad \quad j = 0, 1,2, \dots, mL$$
where $(\Theta, \Theta') \in \mathbb{R}^{6m}$ is sparse and $\| (\Theta, \Theta') \|_0 \leq 2sL$. 

 Then for the same constants $C, C'$ as in Theorem \ref{thm_main}, once
$$
m \geq C, \quad s \leq C' m^{1/(1+\eta)},
$$ 
then the following holds with probability exceeding $1 - \varepsilon - e^{-d^3 \log(3m)}$ with respect to $x \sim d\mu$:
The polynomial coefficients of $f$, as well as the outlier vectors $(\Theta, \Theta')$, can be exactly recovered as the unique solution to the partial $\ell_1$-minimization problem
$$
\min\limits_{c, e} \hspace{1mm} \| e \|_1 \hspace{1mm} \emph{subject to} \hspace{1mm} v-Ac = e.
$$
\end{corollary}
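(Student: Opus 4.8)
The plan is to reduce the time-$\Delta$ problem to the time-1 analysis of Theorem \ref{thm_main} by exploiting that $\Delta = 1/L$. The structural observation is that the orbit $\{X^{j\Delta}\}_{j=0}^{mL}$ reindexes into $L$ interleaved time-1 orbits: writing $j = kL + \ell$ with $0 \le \ell < L$ and $0 \le k < m$, one has $X^{j\Delta} = X^{\ell\Delta}(X^k)$, so for each fixed $\ell$ the subsequence $\{X^{\ell\Delta}(X^k)\}_{k=0}^{m-1}$ is a genuine time-1 orbit of the \emph{same} flow $G \in \mathcal U$, started at $X^{\ell\Delta}(x)$. Since $\mu$ is invariant under the flow, each such starting point is again distributed according to $\mu$ when $X^0 = x \sim d\mu$. (I do not claim the time-$\Delta$ map is itself the time-1 map of a field in $\mathcal U$, as time-rescaling need not preserve the class; the point is only that each sub-orbit is literally a time-1 orbit of $G$.) First I would record the linear system exactly as in \eqref{recast}, now with $\Phi \in \mathbb R^{mL \times r}$ carrying one row per sample time $j\Delta$. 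Because the governing identity $\dot x = f(x)$ holds along the entire continuous trajectory it holds at every sample time, so the true coefficients $\mathcal C$ still satisfy $V - \Phi \mathcal C = \mathcal E$, and the hypothesis $\|(\Theta, \Theta')\|_0 \le 2sL$ together with the residual bounds \eqref{range}--\eqref{bound_residual} give $\|\mathcal E\|_0 \le 6sL$.

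The statistical heart of the argument is a lower bound on the mass of range vectors $\tilde v = Ac$, and this is where the reindexing pays off. For $\psi^c$ as in Lemma \ref{lemma_lower}, define the averaged observable $\bar\psi^c(y) = \tfrac1L \sum_{\ell=0}^{L-1} \psi^c(X^{\ell\Delta}(y))$. Since the flow maps $X^{\ell\Delta}$, $\ell\Delta \in [0,1]$, are uniformly $C^{1+\eta}$-bounded on the compact attractor, $\bar\psi^c$ lies in the same function class as $\psi^c$ with a comparable $C^{1+\eta}$ norm, uniformly over $\|c\|_1 = 1$. The Birkhoff average of $\bar\psi^c$ along the \emph{single} length-$m$ time-1 orbit reproduces the full time-$\Delta$ average, $\tfrac1m\sum_{k=0}^{m-1}\bar\psi^c(X^k) = \tfrac1{mL}\sum_{j=0}^{mL-1}\psi^c(X^{j\Delta})$, while flow-invariance of $\mu$ gives $\int \bar\psi^c \, d\mu = \int \psi^c \, d\mu \ge D$ by Lemma \ref{lemma_lower}. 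Applying Corollary \ref{CLT_main2} once, to this single orbit and uniformly over the family $\{\bar\psi^c : \|c\|_1 = 1\}$, therefore yields $\sum_{j=0}^{mL-1}\psi^c(X^{j\Delta}) \ge \tfrac12 mL\,D\,\|c\|_1^{1+\eta}$ for all $c$ once $m$ is large enough, on an event of probability at least $1 - \varepsilon - e^{-d^3\log(3m)}$ --- note this is governed by $m$, not $mL$, and uses the same constants as in Theorem \ref{thm_main}, which is exactly why the corollary can assert the same $C, C'$. Subtracting the sparse, uniformly bounded residual of \eqref{bound_residual} --- lower order since $s \ll m$ --- yields the same lower bound, up to constants, for $\tilde v$ itself; full column rank of $A$ then follows at once, since a nonzero null-space vector would force the vector of clean polynomial evaluations to be supported on the few corrupted coordinates, contradicting the spread just established.

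From this point the argument is identical to Theorem \ref{thm_main}. By Corollary \ref{prop_cs} applied to the block-diagonal matrix $A$, it remains to verify the partial null space property $\|\tilde v_S\|_1 < \tfrac12 \|\tilde v\|_1$ for every $\tilde v \in \mathcal R(A)\setminus\{0\}$ and every $S$ of cardinality at most $6sL$. The uniform boundedness \eqref{noise_bound}--\eqref{bound_residual} controls $\|\tilde v_S\|_1 \le 6sL\,(B_\Theta + B_\Lambda)^p\|c\|_1$, while the estimate above controls $\|\tilde v\|_1$ from below by a quantity proportional to $mL\,\|c\|_1$ after the same H\"older bookkeeping between the $\ell_1$ and $\ell_{1+\eta}$ norms as in Theorem \ref{thm_main}. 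The decisive point is that the factor $L$ appears on both sides --- in the corruption budget $6sL$ and in the signal floor $\propto mL$ --- so the two powers of $L$ cancel and the null space inequality reduces to the \emph{same} threshold $s \le C' m^{1/(1+\eta)}$, with the \emph{same} $C, C'$, as in the time-1 theorem.

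I expect the main obstacle to be the uneven distribution of corruptions across the sub-orbits: the $2sL$ outliers are not assumed to be spread evenly, and could in principle all fall on a single time-1 sub-orbit, which would then carry far more than the $2s$ outliers that Theorem \ref{thm_main} tolerates. One therefore cannot apply the theorem sub-orbit by sub-orbit as a black box, nor restrict to the sub-sampled orbit $\{X^{Lk}\}$. The resolution is to keep the two layers separate: the statistical estimates (the concentration of $\bar\psi^c$ and the positive floor of Lemma \ref{lemma_lower}) involve only the corruption-free polynomial values, which decompose cleanly over the $L$ time-1 sub-orbits, whereas the location of the outliers is handled entirely at the compressed-sensing layer through the null space property, where the corruption support enters only as a single global set $S$ of size at most $6sL$, agnostic to how it distributes among the orbits. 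This separation is precisely what legitimizes the single, clean application of Corollary \ref{CLT_main2} while still permitting arbitrary concentration of the outliers.
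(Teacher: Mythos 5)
Your reduction is genuinely different from the paper's, and it contains a quantitative gap that prevents it from delivering the corollary as stated. Your plan is to verify the partial null space property directly for the full $3mL\times 3r$ system, with corruption support of size $O(sL)$, using the averaged observable $\bar\psi^c$. The statistical step is fine: flow-invariance of $\mu$ and Lemma \ref{lemma_lower} do give $\sum_{j=0}^{mL-1}\psi^c(X^{j\Delta}) \gtrsim mLD$ uniformly over the net. But the passage from this $(1+\eta)$-power sum to the $\ell_1$ mass of $Ac$ goes through $\|v\|_1 \ge \|v\|_{1+\eta}$, exactly as in the proof of Theorem \ref{thm_main}, and therefore produces a floor of order $(mL)^{1/(1+\eta)}$, not of order $mL$ as you assert. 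Meanwhile the restricted sum $\|(Ac)_S\|_1$ over a corruption set of size $6sL$ is bounded only by a quantity linear in $sL$. The null space inequality then forces $sL \lesssim (mL)^{1/(1+\eta)}$, i.e.\ $s \lesssim m^{1/(1+\eta)} L^{-\eta/(1+\eta)}$. The two powers of $L$ do \emph{not} cancel (they would only if $\eta=0$, which the theorem excludes), so your argument proves a strictly weaker statement than the corollary, with the sparsity threshold degrading in $L$ rather than giving ``the same $C, C'$.''

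The obstacle you flag --- that the $2sL$ corruptions might all land on one time-1 sub-orbit --- is not actually an obstruction to the sub-orbit decomposition, and recognizing this is what lets the paper avoid the $L$-loss. The paper splits the samples into the $L$ interleaved time-1 sub-orbits $U_k^j = U^{Lj+k}$ and applies pigeonhole in the opposite direction from the one you worry about: since the total corruption budget is $2sL$ spread over $L$ sub-orbits, at least one sub-orbit carries at most $2s$ corruptions (in the extreme case you describe, the other $L-1$ sub-orbits are completely clean). Theorem \ref{thm_main} is then applied to that single length-$m$, $O(s)$-corrupted sub-orbit --- which is exactly its native setting, hence the identical constants --- and the rows belonging to the remaining sub-orbits are appended as additional consistent linear constraints via Corollary \ref{cor_cs}, rather than being fed into the null space estimate where they would inflate the corruption support. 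If you want to salvage your single-shot approach, you would have to weaken the conclusion to $s \le C' m^{1/(1+\eta)} L^{-\eta/(1+\eta)}$; to obtain the stated constants you need the pigeonhole reduction.
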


\begin{proof}[Proof of Corollary \ref{cor_main}]
Consider the subsequences 
$$U_{k}^j = U_{k}^{Lj+k}, V_{k}^j = V^{Lj+k}, \quad k = 0,1, \dots, L-1, \quad  j = 0, 1, 2, \dots, m.$$
Each of these $L$ subsequences represents a corrupted measurement vector for a time-1 map of the flow $X^t$.  By the pigeonhole principle, one of these subsequences is sparsely corrupted, having associated corrupted vector of sparsity level $\| (\Theta_k, \Theta'_k) \|_0 \leq s$.  Thus we may apply Theorem \ref{thm_main}, using Corollary \ref{cor_cs} in place of Corollary \ref{prop_cs}. 
\end{proof}

\bigskip
\bigskip
 
 \begin{proof}[Proof of Theorem \ref{thm_main}]
 We break the proof into several parts. 

\bigskip
\bigskip

 \noindent First, consider a fixed polynomial coefficient vector $c = (c_1, c_2, c_3) \in \mathbb{R}^{3r}$ of unit norm $\| c \|_1 = 1$, and the corresponding vector 
 $v^c(x) = (v^c_1(x_1, x_2, x_3)), v^c_2(x_1, x_2, x_3), v^c_3(x_1, x_2, x_3))$ whose components are given by
 $$
 v^c_l(x_1, x_2, x_3) = \sum_{\alpha: | \alpha | \leq p} c_l(\alpha) x_1^{\alpha_1} x_2^{\alpha_2} x_3^{\alpha_3}.
 $$
  Consider the corresponding $C^{1+\eta}$ observable $\psi = \psi^c: \mathbb{R}^3 \rightarrow \mathbb{R}$ given by
\begin{equation}
 \label{observable}
 \psi^c(x) = | v^c_1(x) |^{1+\eta} + | v^c_2(x) |^{1+\eta} + | v^c_3(x) |^{1+\eta}.
\end{equation}
Combining the chain rule, Holder's inequality, and that 
 $\left| |x|^{\eta} - |y|^{\eta} \right| \leq | |x| - |y| |^{\eta}$ by concavity, we bound the $C^{1+\eta}$ norm of $\psi$ by
 \begin{align}
 \| \psi^c \|_{C^{1+\eta}} &\leq \max_{|\beta| \leq 1} \sup_{x \in \Lambda} | D^{\beta} \psi^c(x) | + \max_{|\beta| \leq 1} \sup_{x \in \Lambda} | D^{\beta} \psi^c(x) |  \nonumber \\
 &\leq  2\| c \|_1 (1+\eta) p \left( (B_{\Lambda})^{2p-1} \right)^{\eta} \nonumber \\
 &=   2(1+\eta) p  \left( (B_{\Lambda})^{2p-1} \right)^{\eta}  \nonumber \\
 & =: C_{\eta, \Lambda, p}. \nonumber 
 \end{align}
 In particular, $ \| \psi^c \|_{C^{1+\eta}}$ is bounded above by a constant which is independent of the number of measurements $m$.  Indeed, this bound holds uniformly over all observables corresponding to $c: \| c \|_1 = 1$:
 \begin{equation}
 \label{observe:uniform}
 \max_{c: \| c \|_1 = 1} \| \psi^c \|_{C^{1+\eta}} \leq C_{\eta, \Lambda, p}.
 \end{equation}
 We also have a uniform lower bound on a related quantity by Lemma \ref{lemma_lower}: let $D = D_{\eta, \Lambda, p} > 0$ be the lower bound in Lemma \ref{lemma_lower}. Then
 \begin{equation}
 \label{lower:uniform}
  \min_{c: \| c \|_1 = 1} \int_{\Lambda} \psi^c(x)  d\mu(x) \geq D_{\eta, \Lambda, p}.
 \end{equation}
 
 \bigskip
 \bigskip
 
 \noindent Recall that $m$ is our number of meausurements.  In order to apply a variant of Corollary \ref{CLT_main2} uniformly over all observables $\{\psi^c: \| c \| = 1\}$, we first discretize the set $\{ c: \| c \|_1 = 1\}$ using covering lemmas, and then apply a large deviations result. 
By a well-known result in the literature on covering numbers (see, e.g.,\cite{foucart2013mathematical}[ Appendix C.2]),  there exists a finite set of points ${\mathcal Q}$ in $\{ c: \| c \|_1 = 1\}$ such that 
$$
\max\limits_{c: \| c \|_1 = 1} \min\limits_{q \in {\mathcal Q}} \| c - q \|_1 \leq 1/m,
$$
of cardinality
$$
|{\mathcal Q} | \leq (3m)^{3r}.
$$
We now apply Corollary \ref{CLT_main2} of Theorem \ref{CLT_main} uniformly over the observables $\psi^q: q \in {\mathcal Q} $. Draw an initial condition $X^0 = x$ from the measure $d\mu$.  The following holds with probability $1-\varepsilon$ with respect to the draw of $x$:
 \begin{eqnarray}
&& \min_{q \in {\mathcal Q}} \left| \sum_{j=0}^{m-1} \psi^q( X^j) \right| \nonumber \\
\quad &\geq& \min_{q \in {\mathcal Q}}  \left| m \int_{\Lambda} \psi^q(x) d\mu(x) \right| - \max_{q \in {\mathcal Q}}  \left| \sqrt{C_1} C_{\eta, \Lambda, p} \sqrt{m} Z^q \right| -  \max_{q \in {\mathcal Q}}  \kappa_{\varepsilon, p, \eta} m^{1/4} (\log(m))^{1/2} (\log \log m)^{1/4} ) \nonumber\\
\quad &\geq& m D_{\eta, \Lambda, p}  - \sqrt{m} \sqrt{C_1} C_{\eta, \Lambda, p}  \max_{q \in {\mathcal Q}} | Z^q |  - \kappa_{\varepsilon, p, \eta} m^{1/4} (\log(m))^{1/2} (\log \log m)^{1/4} ),\nonumber
\end{eqnarray}
where $Z^q$ denotes a standard normal random variable. 
\bigskip
\noindent We now bound $ \max\limits_{q \in {\mathcal Q}} | Z^q |.$ First, recall the Chernoff bound for a standard Gaussian random variable $Z$:
\[
P(|Z| \geq t) \leq 2e^{-t^2/2}, \quad \forall t \geq 0.
\]
Recalling that $| {\mathcal Q} |  \leq (3n)^{3r},$ the union bound then gives
$$
P(\exists q \in {\mathcal Q}: |Z^q| \geq t) \leq 2(3m)^{3r} e^{-t^2/2} \leq  2e^{3r \log(3m) - t^2/2}, \quad \forall t \geq 0.
$$
In particular,
$$
P(\forall q \in {\mathcal Q}: \hspace{1mm} | Z^q | \leq 2\sqrt{3r \log(3m)}  ) \geq 1- 2 e^{-3r \log(3m)}.
$$
\bigskip
\noindent 
All together, we find with probability exceeding  $1 - \varepsilon  - 2 e^{-3r \log(3m)}$ with respect to $X^0 \sim d\mu$, 
 \begin{eqnarray}
 \min_{q \in {\mathcal Q}}  \left| \sum_{j=0}^{m-1} \psi^q( X^j) \right| &\geq&  m D - \sqrt{m \log(m) } C  - \kappa m^{1/4} (\log(m))^{1/2} (\log \log m)^{1/4}, \nonumber
 \end{eqnarray}
 where the constants $D, C$, and $\kappa$ depend only on $\varepsilon, \eta, \Lambda,$ and $p$. Thus, for a sufficiently large constant $C' = C'(\varepsilon, d, \eta)$ and sufficiently small constant $C''= C''(\varepsilon, d, \eta)$,  and using the inequality $\| v \|_1 \geq \| v \|_{1+\eta}$,
the following uniform lower bound holds with probability exceeding  $1 - \varepsilon  - 2 e^{-3r \log(3m)}$ with respect to $X^0 \sim d\mu$:
if the number of measurements satisfies
\begin{equation}
\label{result_net}
m \geq C',
\end{equation}
then
 \begin{eqnarray}
 \label{result_net2}
 \min_{q \in {\mathcal Q}}  \left( \sum_{j=0}^{m-1} | v^q_1(x) | + | v^q_2(x) | + | v^q_3(x) | \right) \geq (C'' m)^{1/(1+\eta)}. 
\end{eqnarray}

\bigskip
\bigskip

\noindent Recall now by \eqref{range} that 
$$
 \sum_{j=1}^{3m} |  (A q)_j | = \sum_{j=0}^{m-1} \psi^q( X^j) + \sum_{j=1}^m \sum_{l=1}^3 |R_l(j)|.
 $$
Assuming \eqref{result_net} and with the same probability, and using the bound \eqref{bound_residual} on $\| R_l \|_{\infty}$ and that $\| R_l \|_{0} \leq 2s$,
 \begin{eqnarray}
 \label{result_net3}
 \min_{q \in {\mathcal Q}}  \sum_{j=1}^{3m} |  (A q)_j | &\geq&  (C'' m)^{1/(1+\eta)} - \sum_{j=1}^m \sum_{l=1}^3 |R_l(j)| \nonumber \\
 &\geq& (C'' m)^{1/(1+\eta)}  - 6s (B_{\Theta}+B_{\Lambda})^d.   \nonumber 
\end{eqnarray}

\bigskip
\bigskip

\noindent We now use a continuity argument to pass this lower bound from the discrete net ${\mathcal Q}$ to the entire sphere $\{ c: \| c \|_1 = 1 \}$. Fix $c \in \{ c: \| c \|_1 = 1 \}$ and let $q \in {\mathcal Q}$ be such that $\| c - q \|_1 \leq 1/m$, which exists by construction of ${\mathcal Q}$.   By Holder's inequality,
\begin{align}
\| A (c - q) \|_1 &\leq 3m B_{\Lambda}^d \| c - q \|_1 + s (B_{\Lambda}+B_{\Theta})^d \| c - q \|_1 \nonumber \\
 &\leq  3B_{\Lambda}^d + 3\frac{s}{m} (B_{\Lambda}+B_{\Theta})^d. 
\end{align}
Thus, 
\begin{eqnarray}
 \min_{ c: \| c \|_1 = 1 } \| A c \|_1 &\geq& (C'' m)^{1/(1+\eta)}  - C'''s (B_{\Lambda}+ B_{\Theta})^d.  \nonumber
\end{eqnarray}

At the same time, for a subset $S \subset \{1,2,\ldots,m\}$ of size $s$, we have the immediate and uniform upper bound
$$
 \sum_{j \in S} | (A c)_j | \leq s B_{\Lambda}^d  \| c \|_1. 
$$

Thus, there is a constant $C''''$ depending on only $\varepsilon, p, \eta$, and $B_{\Theta})$ such that if $s \leq  C'''' m^{1/(1+\eta)}$, then uniformly over $c: \| c \|_1 = 1$, and uniformly over subsets $S \subset [3m]$ of size $s$,
\begin{equation}
\label{eq:expression}
\| (Ac)_S \|_1 < \frac{1}{2} \| A c \|_1.
\end{equation}
and in particular, $ \| Ac  \|_1 > 0$ for all $c: \| c \|_1 = 1$, implying that the columns of $A$ are linearly independent.  

\bigskip
\bigskip

\noindent The theorem follows for $c \neq 0$ of arbitrary $\ell_1$ norm by normalizing both sides of the expression \eqref{eq:expression}.  The theorem follows by application of Corollary \ref{prop_cs}.
\end{proof}

As a consequence of our proof, we also provide theoretical guarantees for the algorithm of \cite{brunton2015discovering} in the noiseless case.
\begin{corollary}
\label{cor:fullrank}
Under the same conditions as in Theorem \ref{thm_main}, the matrix $\Phi$ constructed from uncorrupted measurements of the time-$\Delta$ map $X_j = X^{\Delta_j}$ is full rank, provided that $m\geq C'$.
\end{corollary}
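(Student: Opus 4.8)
The plan is to recognize that this corollary is exactly the noiseless specialization ($\Theta = \Theta' = 0$) of the final step already carried out in the proof of Theorem \ref{thm_main}, where we showed that the columns of the augmented block-diagonal matrix $A = \mathrm{diag}(\Phi,\Phi,\Phi)$ are linearly independent. Since $A$ is block diagonal with three identical blocks $\Phi$, its columns are linearly independent if and only if the columns of $\Phi$ are, so it suffices to re-run that step for a single block. Here ``full rank'' means full column rank, equivalently that no nonzero $c \in \mathbb{R}^r$ lies in the null space of $\Phi$; geometrically, that no nontrivial algebraic polynomial of degree at most $p$ vanishes at every sampled point.

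First I would observe that when the measurements are uncorrupted we have $U^t = X^t$ and $\Theta = \Theta' = 0$, so the residual vectors $R_l$ defined in \eqref{range} vanish identically. Consequently every term involving $\|R_l\|_\infty$, $\|R_l\|_0$, the sparsity level $s$, and $B_\Theta$ drops out of the estimates following \eqref{range}, and the same chain of inequalities (the covering net over $\{c : \|c\|_1 = 1\}$, the Gaussian large-deviation bound on $\max_{q \in {\mathcal Q}} |Z^q|$, and the continuity argument) yields directly the lower bound
$$
\min_{c:\, \|c\|_1 = 1} \| A c \|_1 \;\geq\; (C'' m)^{1/(1+\eta)},
$$
valid with probability exceeding $1 - \varepsilon - 2 e^{-3r\log(3m)}$. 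In the uncorrupted case this requires no upper bound on $s$, so the single hypothesis $m \geq C'$ suffices. Since the right-hand side is strictly positive, $Ac \neq 0$ for every nonzero $c$, whence the columns of $A$, and therefore the columns of $\Phi$, are linearly independent; that is, $\Phi$ has full column rank.

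Finally I would handle the passage from the time-$1$ map to the time-$\Delta$ map with $\Delta = 1/L$. Since $\mu$ is invariant under the flow, drawing $X^0 \sim d\mu$ makes the subsequence $X^0, X^{\Delta L}, X^{2\Delta L}, \dots = X^0, X^1, X^2, \dots$ precisely the time-$1$ map, to which the preceding argument applies verbatim. The rows of $\Phi$ indexed by this subsequence thus form a submatrix $\Phi_0$ of full column rank $r$, and because adjoining additional rows can only preserve or increase the column rank, the full matrix $\Phi$ built from all the measurements of the time-$\Delta$ map also has full column rank.

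The argument is almost entirely a bookkeeping specialization of work already done, so I do not expect a genuine analytic obstacle: the hard estimate, the uniform lower bound over the $\ell_1$-sphere obtained from the ASIP and covering numbers, is inherited from Theorem \ref{thm_main} with the residual terms set to zero. The only points requiring care are the transfer of column independence from the block-diagonal $A$ to $\Phi$, and the observation in the last paragraph that the time-$\Delta$ sampling contains an exact time-$1$ subsequence while rank is monotone under the adjunction of rows.
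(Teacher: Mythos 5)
Your proposal is correct and follows essentially the same route as the paper, which offers no separate proof but derives the corollary directly from the step in the proof of Theorem \ref{thm_main} where $\min_{c:\|c\|_1=1}\|Ac\|_1>0$ forces the columns of $A$ (hence of each block $\Phi$) to be linearly independent. Your additional bookkeeping --- dropping the residual terms in the noiseless case and passing from the time-$1$ subsequence to the time-$\Delta$ matrix via monotonicity of column rank under adjoining rows --- is a faithful and slightly more careful rendering of the argument the paper leaves implicit.
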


\section{Numerical Method}
In this section, we explain how to solve our proposed model \eqref{model:main} numerically by using augmented Lagrangian/Bregman distance and alternating minimization method. Recall our proposed model:
\begin{equation*}
\begin{aligned}
\min\limits_{(\mathcal C,\mathcal E)} \, \|{\mathcal E}\|_{2,1} =&\min\limits_{({\mathcal C},{\mathcal E})} \, \sum\limits_{j=1}^m\, \|{\mathcal E}(j,:)\|_2,\\
&\text{subject to}\quad \Phi(X^\circ){\mathcal C} +{\mathcal E}  =\dot{X^\circ} \quad \text{and  \,}{\mathcal C} \ \text{is sparse.}
\end{aligned}
\end{equation*}
The corresponding augmented Lagrangian is of the form
\begin{equation}
\begin{aligned}
(\mathcal C^{k+1},\mathcal E^{k+1}) &= \min\limits_{({\mathcal C},{\mathcal E})} \,  \sum\limits_{j=1}^m\, \|\mathcal E(j,:)\|_2 + \dfrac{\mu}{2} \| \Phi(X^\circ)\mathcal C + \mathcal E - \dot{X^\circ} + b^k\|_F^2,\\
b^{k+1} &= b^k +\Phi(X^\circ){\mathcal C}^{k+1} + {\mathcal E}^{k+1} -\dot{X^\circ}.
\end{aligned}
\label{eqn:augmented}
\end{equation}\textbf{}
Now we can apply the alternating minimization method to solve problem  \eqref{eqn:augmented}.
\begin{itemize}
	\item The $\mathcal C$-subproblem:
	$${\mathcal C}^{k+1} = \min\limits_{\mathcal C}  \| \Phi(X^\circ)\mathcal C + \mathcal E^{k} - \dot{X^\circ} + b^k\|_F^2,\quad\text{s.t. $\mathcal C$ is sparse.}$$

\item The $\mathcal E$-subproblem: 
	$$\mathcal E^{k+1} = \min\limits_{\mathcal E} \sum\limits_{j=1}^m\, \|\mathcal E(j,:)\|_2 + \dfrac{\mu}{2} \| \Phi(X^\circ)\mathcal C^{k+1} + \mathcal E - \dot{X^\circ}+b^k\|_F^2.$$
\end{itemize}

Notice that the $\mathcal C$-subproblem is a least-squares problem for an over-determined system. The sparsity of $\mathcal C$ is a property of the system itself, therefore we enforce the sparsity of $\mathcal C$ by applying the hard-thresholding operator after obtaining the least-squares solution:
$$\mathcal C^{k+1}=S_h\left((\Phi(X^\circ))^{-1} (\dot{X^\circ}-\mathcal E^k-b^k),\lambda\right),$$
where 
$$S_h(u,\gamma) := u\cdot I_{\abs{u}\geq\gamma} = \begin{cases} &u \quad\text{if} \quad \abs{u}\geq\gamma\\
& 0\quad\text{otherwise}.
\end{cases}
$$
It is also discussed in \cite{brunton2015discovering} that the mentioned method is robust to noise in recovering the true coefficients $\mathcal{C}$.

The solution for $\mathcal E$ is given explicitly
$$\mathcal E^{k+1}= S_{2} \left(\dot{X^\circ} -b^k-\Phi(X^\circ)\mathcal C^{k+1},\, \mu \right),$$
where
$$S_2(u_j,\gamma) = \max \left(1 -\dfrac{1}{\gamma \|u_j\|_2},\,0\right) u_j,$$
for each row $u_j$ of $u$.

Below is the summary of the algorithm for problem \eqref{eqn:augmented}.

\bigskip

\noindent\fbox{%
\begin{minipage}{\dimexpr\linewidth-2\fboxsep-2\fboxrule\relax}
\begin{algorithmic}
\State \underline{\textbf{Algorithm}}
\State Given: $\mathcal E^0,b^0, tol $ and parameters $\lambda, \mu$.
\medskip
\While{$||\mathcal E^{k}-\mathcal E^{k-1}||_{\infty}> tol$}
\medskip
\State $\mathcal C^{k+1}=S_h\left((\Phi(X^\circ))^{-1} (\dot{X^\circ}-\mathcal E^k-b^k),\lambda\right)$
\medskip
\State $\mathcal E^{k+1}= S_{2} \left(\dot{X^\circ} -b^k-\Phi(X^\circ)\mathcal{C}^{k+1},\, \mu \right)$
\medskip
\State $b^{k+1}=b^k+\Phi(X^\circ)\mathcal{C}^{k+1} + \mathcal E^{k+1} -\dot{X^\circ}$
\medskip
 \EndWhile
\end{algorithmic}
\end{minipage}%
}

 \bigskip
 
\section{Numerical Results}
In this section, we apply the method from previous section to various chaotic systems including the well-known Lorenz system and R{\" o}ssler system. Moreover, our proposed reconstruction method also works numerically for systems exhibiting so-called \emph{hyperchaos} whose dimension is greater than three, which suggests that our reconstruction guarantee may extend to higher dimensional systems.  We reiterate that chaotic systems are not only well-suited for theoretical reconstruction guarantees, but also regimes where it is of upmost importance to recover the governing equations with high precision, in light of the property of sensitivity to initial conditions.  Therefore, we define the following relative error formula for the coefficients
\begin{equation*}
(\text{coefficient})\, \text{error}:= \max\left(\max\limits_{{\mathcal C}_{\text{true}}(i) \not = 0} \left |\dfrac{{\mathcal C}_{\text{recovered}}(i) - {\mathcal C}_{\text{true}}(i)}{{\mathcal C}_{\text{true}}(i)}\right |, \max\limits_{\substack{{\mathcal C}_{\text{recovered}}(i) \not =0,\\ {\mathcal C}_{\text{true}} (i)=0}}{| {\mathcal C}_{\text{recovered}}(i)|} \right).
\end{equation*}
In words, the coefficient error measures the maximal relative recovery error over the different polynomial coefficients. Throughout this section, error stands for coefficient error, unless otherwise stated.

In practice, we will not observe the derivative information $\dot{X}^o$.  Instead, we can approximate the rate of change in the system from the state space measurements using first-order, second-order or higher-order approximations.  Explicitly, given $x(t)\in\mathbb{R}^n$, the rate of change in $x$ can be approximated as
\begin{equation*}
\begin{aligned}
\dot{x}(t) &= \dfrac{x(t+dt) - x(t)}{dt} + \mathcal{O}(dt),\quad\text{(first-order approximation)}\\
\dot{x}(t) &= \dfrac{x(t+dt) - x(t-dt)}{2\,dt} + \mathcal{O}(dt^2), \quad\text{(second-order approximation)}
\end{aligned}
\end{equation*}
To get better accuracy, we use the second-order approximation of rate of change for our optimization model.

\bigskip

We first show some numerical results for the Lorenz system
\begin{equation}
\begin{cases}
\frac{dx_1}{dt} &= 10(x_2-x_1)\\
\frac{dx_2}{dt} & = x_1(28 -x_3) -x_2\\
\frac{dx_3}{dt} & = x_1x_2 - \frac{8}{3} x_3,
\end{cases}
\label{eqn:lorenz}
\end{equation}
with different percentages of corruption. To simulate the measurement data, we first solve the Lorenz system \eqref{eqn:lorenz} using the fourth-order Runge-Kutta method with $dt = 0.0005$. Then we randomly assign locations where the data is corrupted over intervals of time, and add Gaussian noise with standard deviation $\sigma$ to the data at those corrupted intervals. The bandwidth of corruption ranges from 5 to 50. From the simulated data, we build the matrix $X^\circ$, compute the time derivative $\dot{X^\circ}$ using the second-order approximation of derivatives, and build the dictionary $\Phi(X^\circ)$. We verify our algorithm for the integration (from $t=0$ to $t=20, dt = 0.0005$) of 40000 measurements with different percentages of corruption. The results are shown in Figure \ref{fig:lorenz20} and Figure \ref{fig:lorenz50}. In all cases, our algorithm can detect exactly the locations of the outliers and recover the coefficients in the polynomial equations with very high accuracy.  Notice that our model can tolerate a high percentage of corruption as long as the size of the data is sufficiently large.  
  \begin{figure}[!ht]
\begin{minipage}[b]{0.48\linewidth}
\centering
\includegraphics[width  =\linewidth]{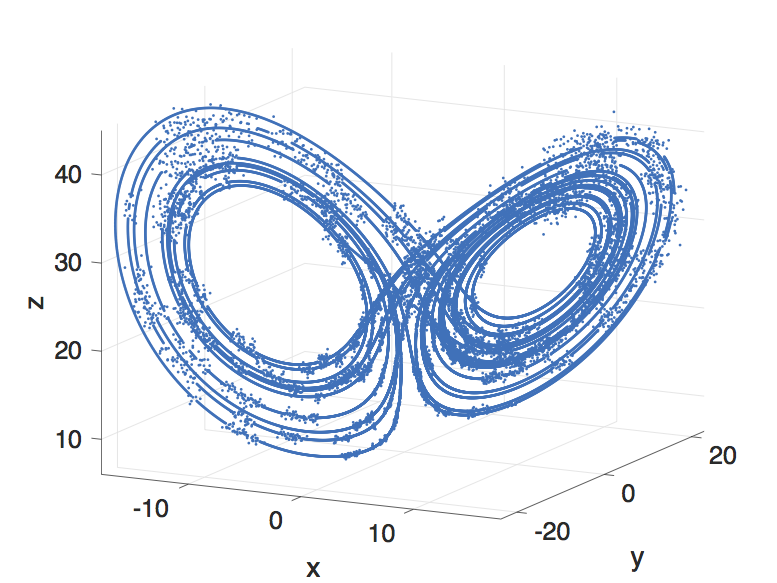}
\vspace{-3. cm}
 \end{minipage}
 \begin{minipage}[b]{0.48\linewidth}
 \centering
  \begin{tabular}{ |c || c | c | c | }
    \hline
     & $ \dot{x_1}$  & $\dot{x_2}$ & $\dot{x_3}$ \\ \hline
    1 & 0 & 0  &  $0$ \\ \hline
    $x_1$ & -9.999947 & 27.9995 & 0 \\ \hline
    $x_2$ & 9.999949 & -0.9999 & 0 \\ \hline
    $x_3$ & 0  & 0 & -2.666648 \\ \hline
    $x_1^2$ & 0 & 0 & 0 \\ \hline
    $x_1x_2$ & 0 & 0 & 0.999993 \\ \hline
    $x_1x_3$ & 0 & -0.999986 & 0 \\ \hline
    $x_2^2$ & 0 & 0 & 0 \\ \hline
    \vdots & \vdots &\vdots &\vdots \\ \hline
    $x_3^4$ & 0 & 0 &0 \\ \hline
  \end{tabular}
  \end{minipage}
\caption{Left: Lorenz system \eqref{eqn:lorenz}, with 19.19\% corrupted data, Tfinal = 20, dt = 0.0005, hard-thres = 0.1, row-thres = 0.0125, tol = 0.005. Right: the recovered coefficients. The model recovers the coefficients within 0.0096\% error and detect exactly the locations of the outliers after 22 iterations.}
\label{fig:lorenz20}
\end{figure}

\begin{figure}[!ht]
\centering
\includegraphics[width = 2.1 in ]{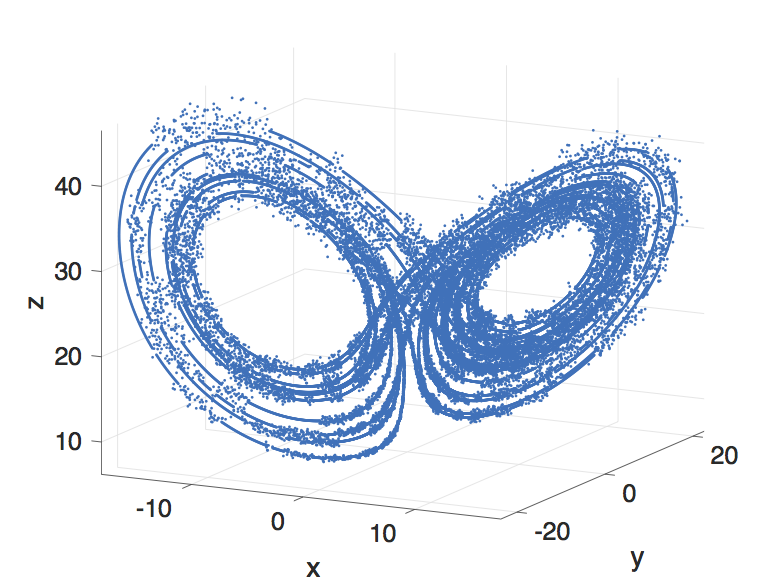}\
\includegraphics[width = 2.1 in]{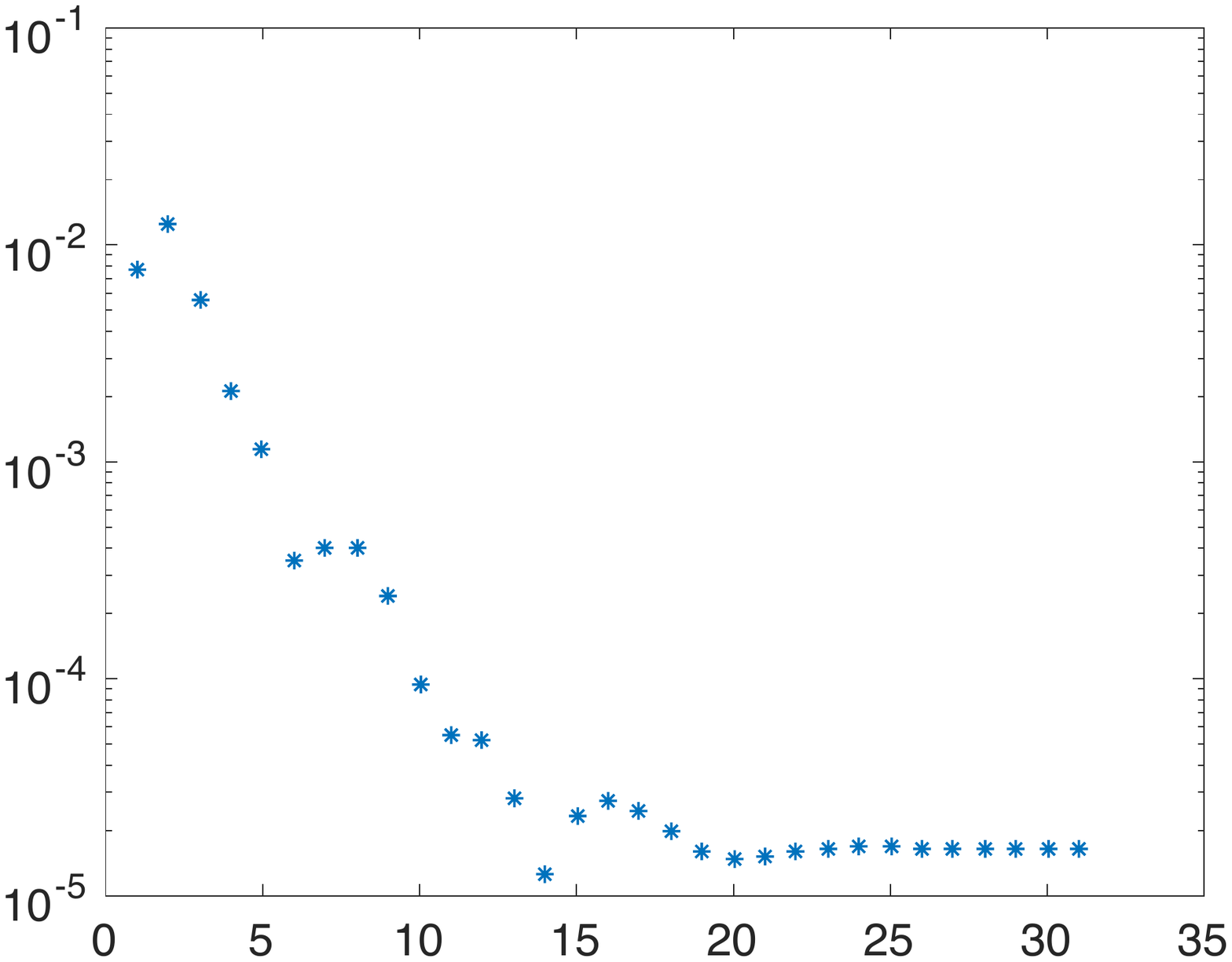}\
\includegraphics[width = 2.1 in]{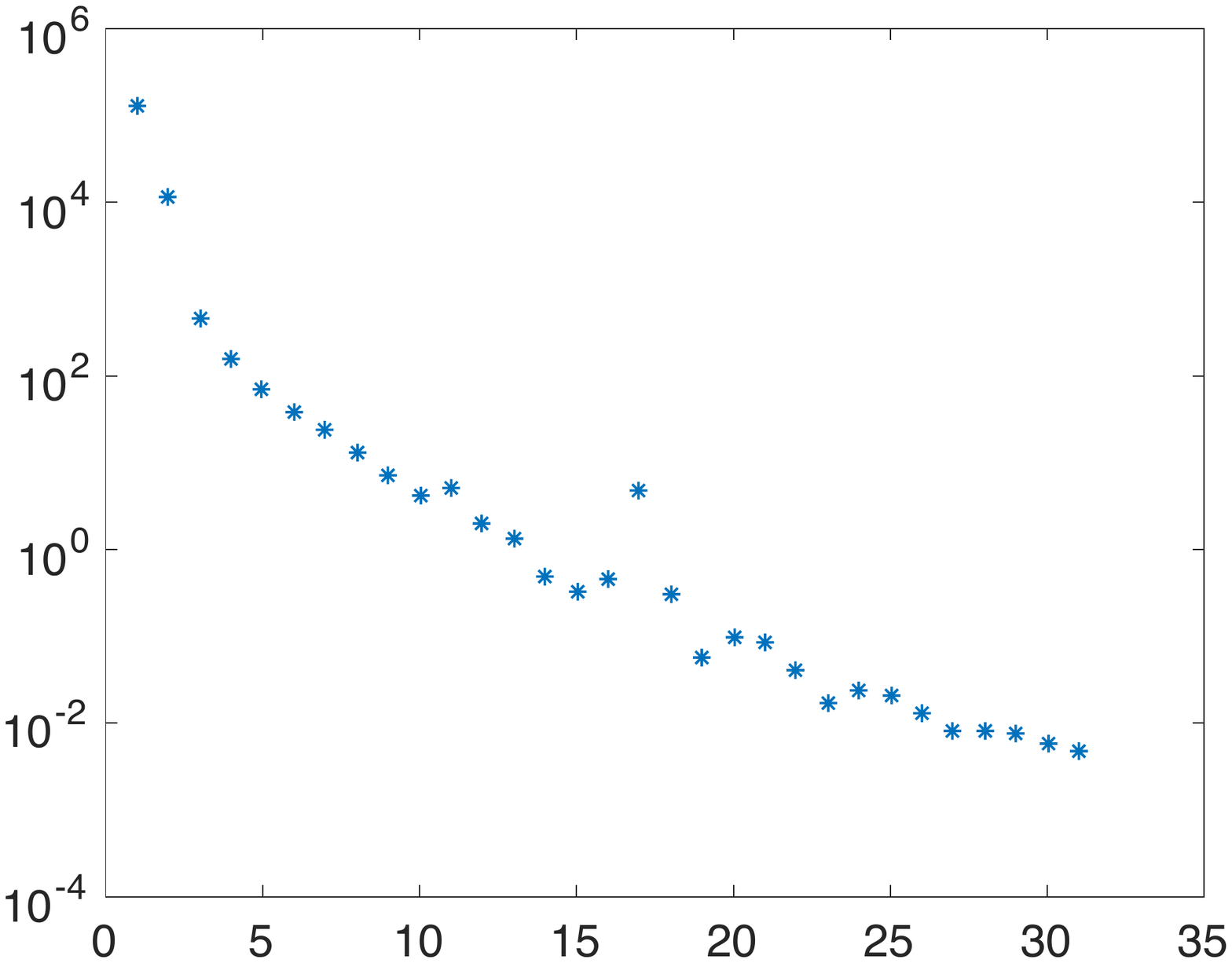}
\caption{Left: Lorenz system \eqref{eqn:lorenz}, with 49.53\% corrupted data, Tfinal = 20, dt = 0.0005, hard-thres = 0.1, row-thres = 0.0125, tol = 0.005. The error between the recovered coefficients iterates and the true ones (middle) and the error between two consecutive $\mathcal{E}$ (right) versus the number of iterations (in logarithmic scale for the vertical axis).  The model recovers the coefficients with 0.0096\% error and detect exactly the locations of the outliers after 31 iterations.}
\label{fig:lorenz50}
\end{figure}
We also examine our proposed scheme for the Lorenz system over a shorter interval of time.  In Figure \ref{fig:lorenz20_short}, 5000 measurements are given with 22.55\% and 71.89\% corrupted, corresponding to $\text{dt} = 0.0005$ and $\text{Tfinal} = 2.5$.  All other parameters remain the same as in the previous example. The model recovers the coefficients within 0.0317\% and 0.0477\% errors, respectively, and detects exactly the locations of outliers after 24 iterations. If the number of measurements is smaller, for example if $\text{Tfinal} \leq 2$, then the scheme does not work as well, illustrating the importance of observing the system over a critical minimal amount of time. 

\begin{figure}[!ht]
\centering
\includegraphics[width = 3. in ]{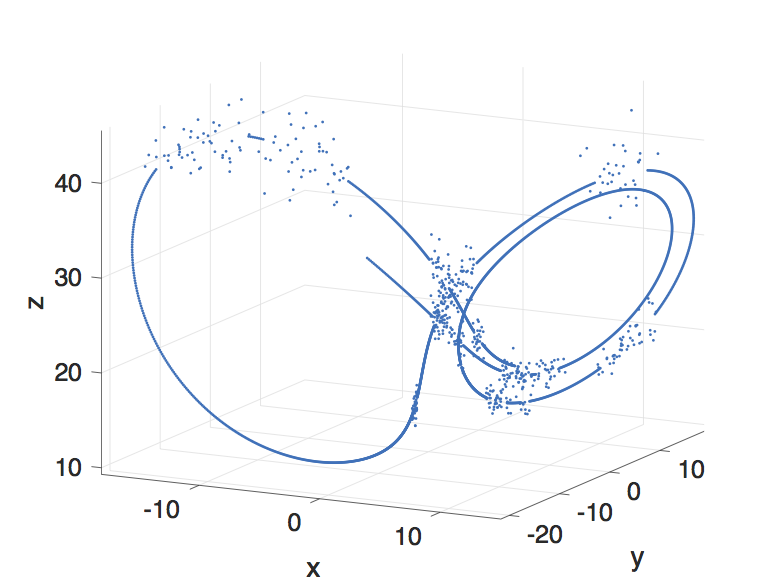}\quad
\includegraphics[width = 3. in]{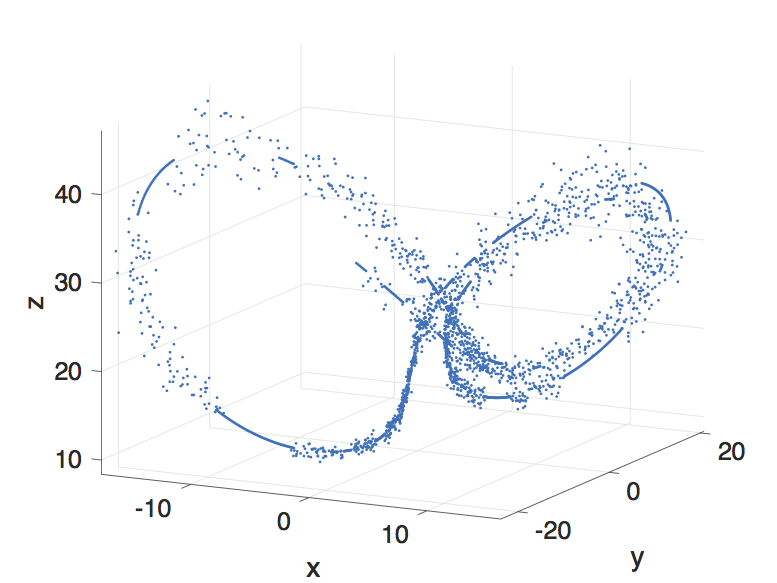}
\caption{Lorenz trajectories with 22.55\% corruption (left) and with 71.89\% corruption (right), associated with the system \eqref{eqn:lorenz}, Tfinal = 2.5, dt = 0.001, hard-thres = 0.1, row-thres = 0.0125, tol = 0.005. The model detects exactly the locations of the outliers in both data and recovers the coefficients with the errors 0.0317\% and 0.0477\%, respectively.}
\label{fig:lorenz20_short}
\end{figure}

Finally, we verify our proposed scheme for the Lorenz system over a much longer interval of time time, in order to gauge whether the butterfly effect (sensitivity to initial conditions) begins to interfere with the recovery conditions.  In Figure \ref{fig:lorenz100}, 200000 measurements are taken with $19.75\%$ corruption, corresponding to $\text{dt = .0005}$ and $\text{Tfinal = 100}$ and the other parameters as in previous examples. The model recovers the coefficients within $0.0097\%$ error and detect exactly the locations of the outliers after 29 iterations.
\begin{figure}
\centering
\includegraphics[width = 2.1 in ]{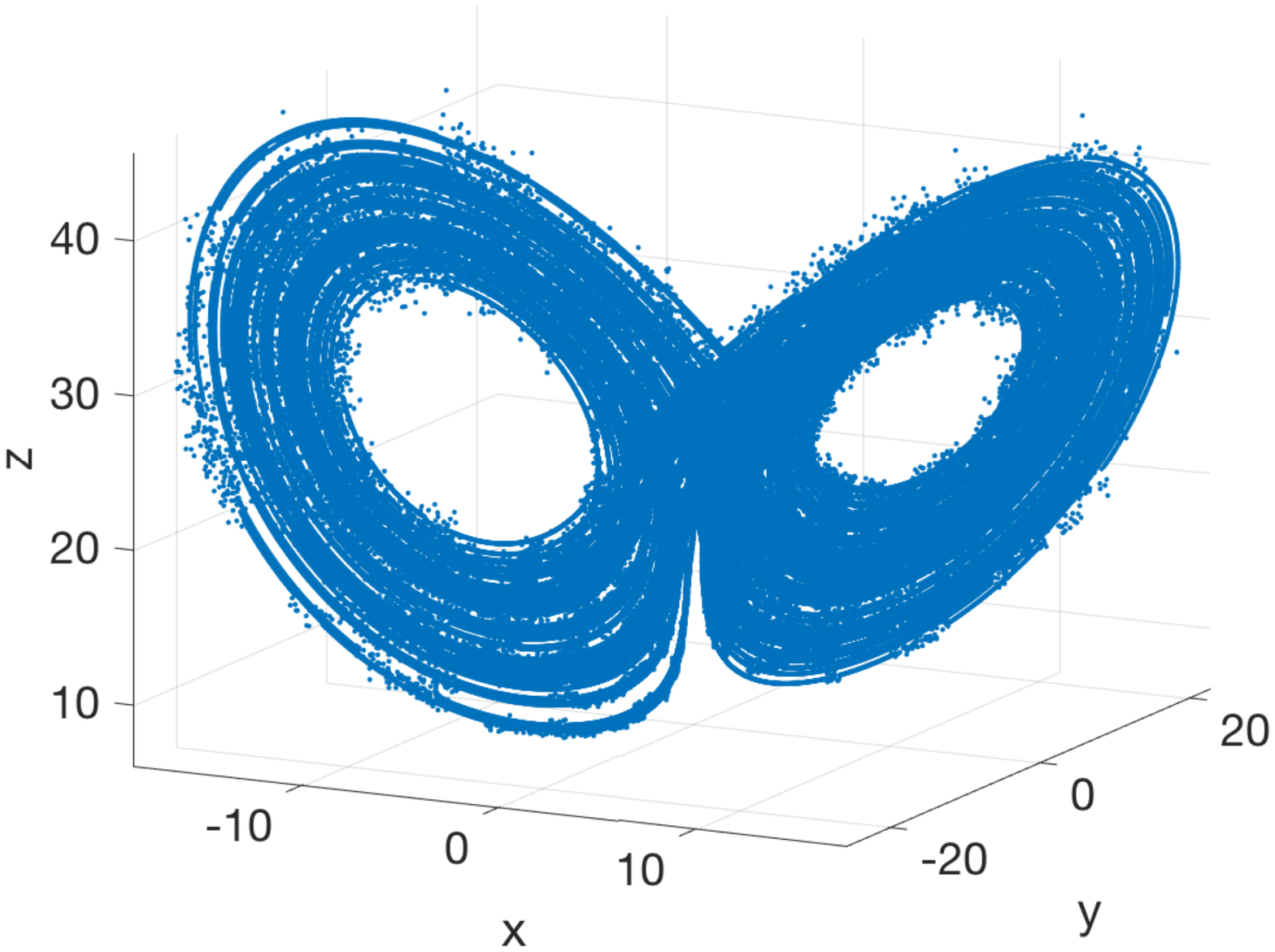}\
\includegraphics[width = 2.1 in]{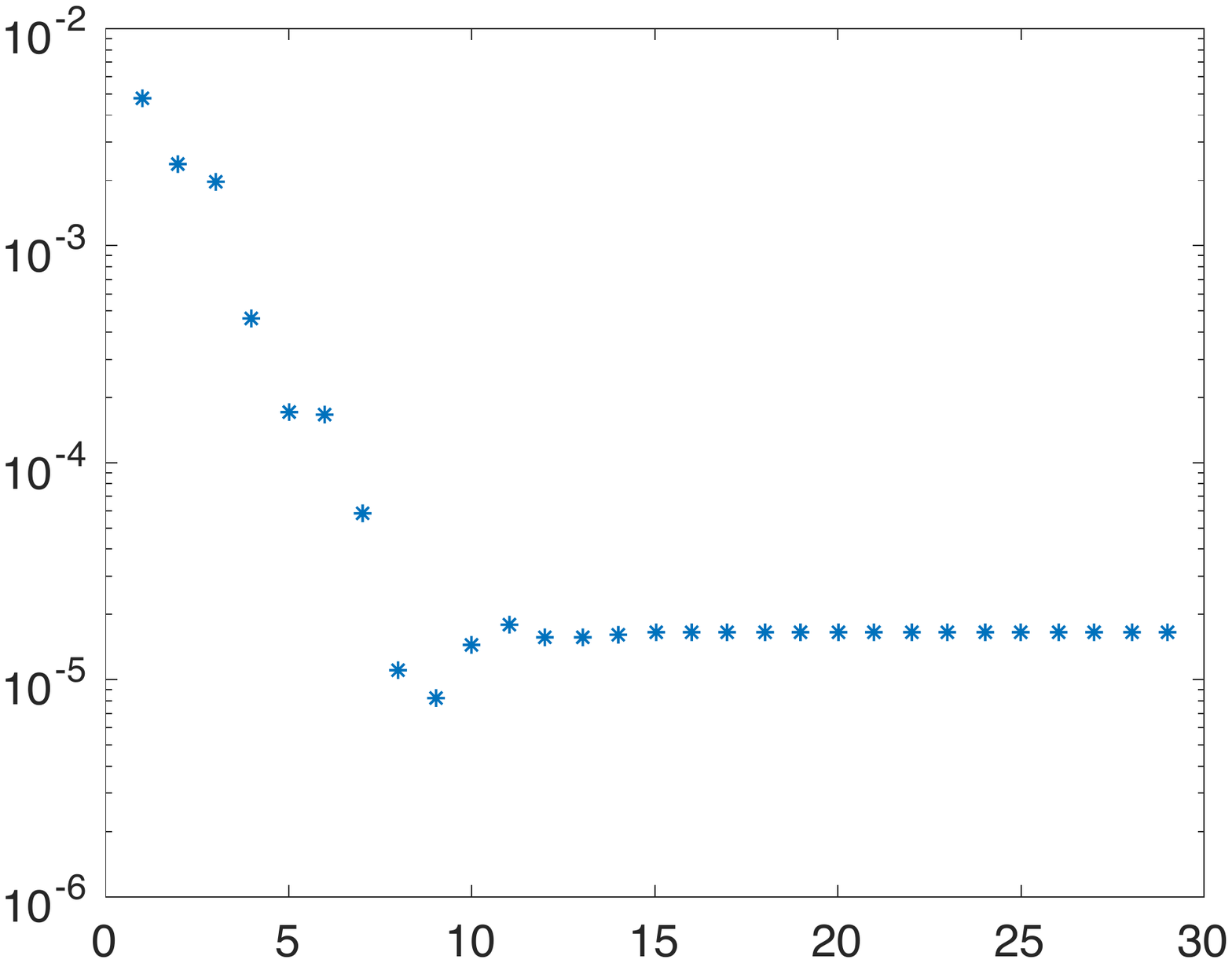}\
\includegraphics[width = 2.1 in]{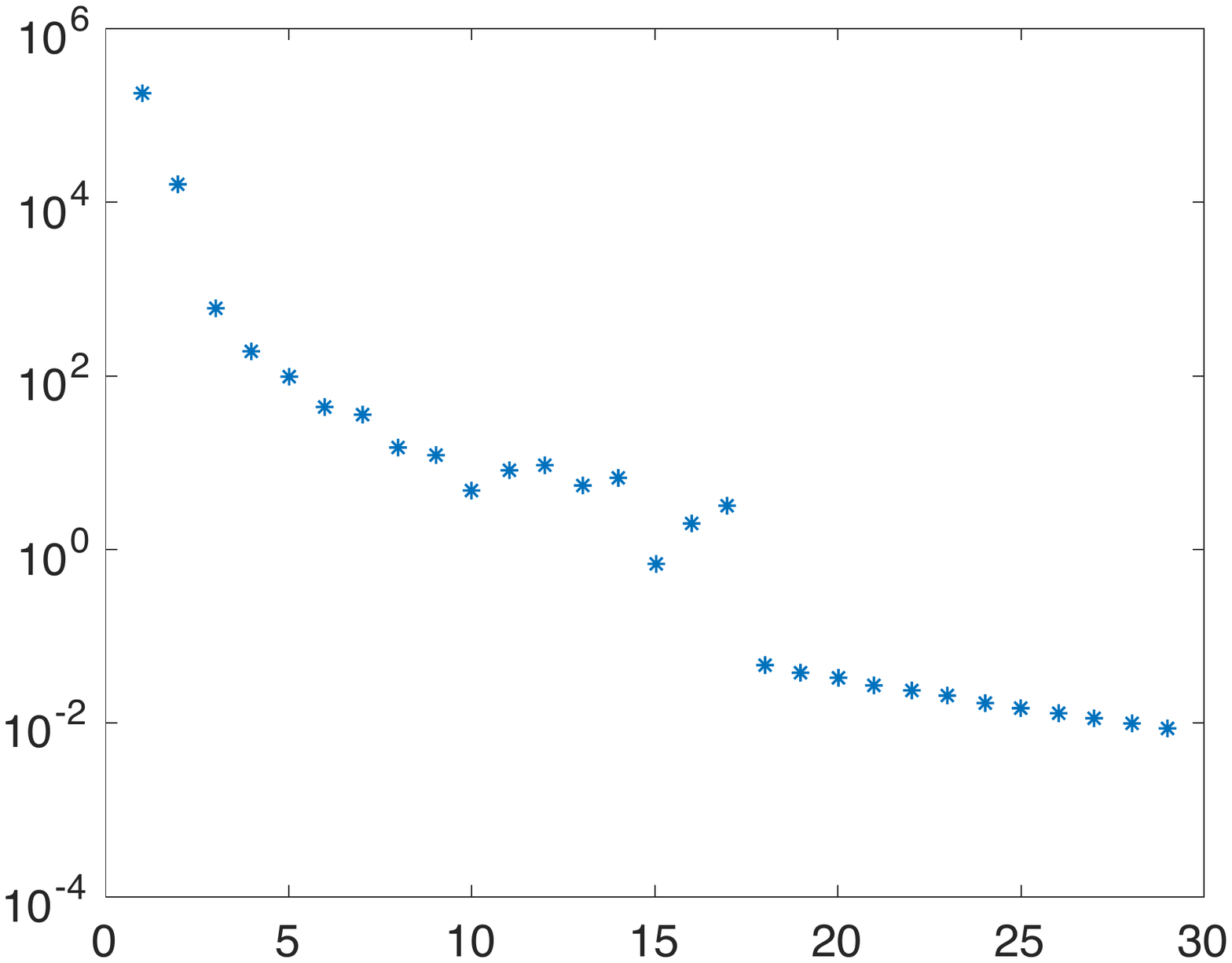}
\caption{Left: Lorenz system \eqref{eqn:lorenz}, with 19.75\% corrupted data, Tfinal = 100, dt = 0.0005, hard-thres = 0.1, row-thres = 0.0125, tol = 0.01. The error between the recovered coefficients iterates and the true ones (middle) and the error between two consecutive $\mathcal{E}$ (right) versus the number of iterations (in logarithmic scale for the vertical axis).  The model recovers the coefficients with 0.0097\% error and detect exactly the locations of the outliers after 29 iterations.}
\label{fig:lorenz100}
\end{figure}
In the next step, we show the robustness of our model to a small amount of additional additive noise.  After simulating the corrupted data from the Lorenz system, we add Gaussian noise to the entire data.  From the noisy data, we build the dictionary and approximate the time derivative. To test the robustness of our model, we simulate 100 sets of data and check how many times our model can detect exactly the locations of outliers. The summary is shown in Table 1. 

\bigskip
\begin{table}
\begin{tabular}{|c |c|c|}
\hline
 Standard Deviation of Noise & \# Times (out of 100) Outliers Detected Exactly & Coefficient Error (\%)   \\
 \hline
 0.4*dt & 89 &  $\min = 0.0009, \max =  0.0525$\\
 \hline
 0.6*dt & 87 & $\min = 0.0006, \max = 0.9395$\\
 \hline 
 0.8*dt & 65 &  $\min = 0.0012, \max = 1.57 $\\
 \hline
\end{tabular}
\caption{Different noise levels and the recovery results associated with the Lorenz system, Tfinal = 20, dt = 0.0005, hard-thres = 0.1, row-thres = 0.0125, tol = 0.005 and around $20\%$ corrupted.} 
 \end{table}
 Next, we examine the R{\" o}ssler system 
 \begin{equation}
\begin{cases}
\frac{dx_1}{dt} &= -x_2-x_3\\
\frac{dx_2}{dt} & = x_1+0.2\, x_2\\
\frac{dx_3}{dt} & = 0.2 - 5.7x_3 + x_1x_3,
\end{cases}
\label{eqn:rossler}
\end{equation}
with different percentages of corruption. We simulate the data in the same way as for the Lorenz system. Our algorithm can detect exactly the locations of outliers in all such cases, and recover the constant coefficient as well as other coefficients with very high accuracy. The numerical results are shown in Figure \ref{fig:Rossler11} to Figure \ref{fig:Rossler40}.

 \begin{figure}
 \begin{minipage}[b]{0.48\linewidth}
\centering
\includegraphics[width = \linewidth]{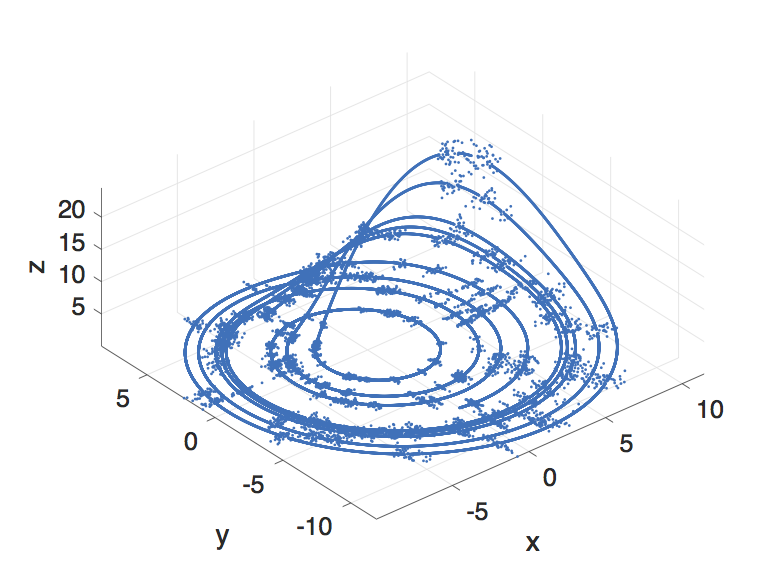}
\vspace{-3cm}
\end{minipage}
   \begin{minipage}[b]{0.48\linewidth}
   \centering
  \begin{tabular}{ |c || c | c | c | }
    \hline
     & $ \dot{x_1}$  & $\dot{x_2}$ & $\dot{x_3}$ \\ \hline
    1 & 0 & 0  &  $0.2012$ \\ \hline
    $x_1$ & 0 & 0.9994 & 0 \\ \hline
   $x_2$ & -1.0021 & 0.1948 & 0 \\ \hline
    $x_3$ & -1.0044  & 0 & -5.7009 \\ \hline
    $x_1^2$ & 0 & 0 & 0 \\ \hline
    $x_1x_2$ & 0 & 0 & 0.00 \\ \hline
    $x_1x_3$ & 0 & 0 & 1.0009 \\ \hline
    $x_2^2$ & 0 & 0 & 0 \\ \hline
    \vdots & \vdots &\vdots &\vdots \\ \hline
    $x_3^4$ & 0 & 0 &0 \\ \hline
     \end{tabular}
\end{minipage}   
 \caption{Left: R{\" o}ssler system \eqref{eqn:rossler} with 10.2\% corrupted data, Tfinal = 50, dt = 0.0005, hard-thres = 0.05, row-thres = 0.025, tol = $10^{-5}$. Right: the recovered coefficients. The model recovers the coefficients with 0.6\% error and detect exactly the locations of the outliers after 160 iterations.}
 \label{fig:Rossler11}
\end{figure}

\begin{figure}
\begin{minipage}[b]{0.48\linewidth}
\centering
\includegraphics[width = \linewidth]{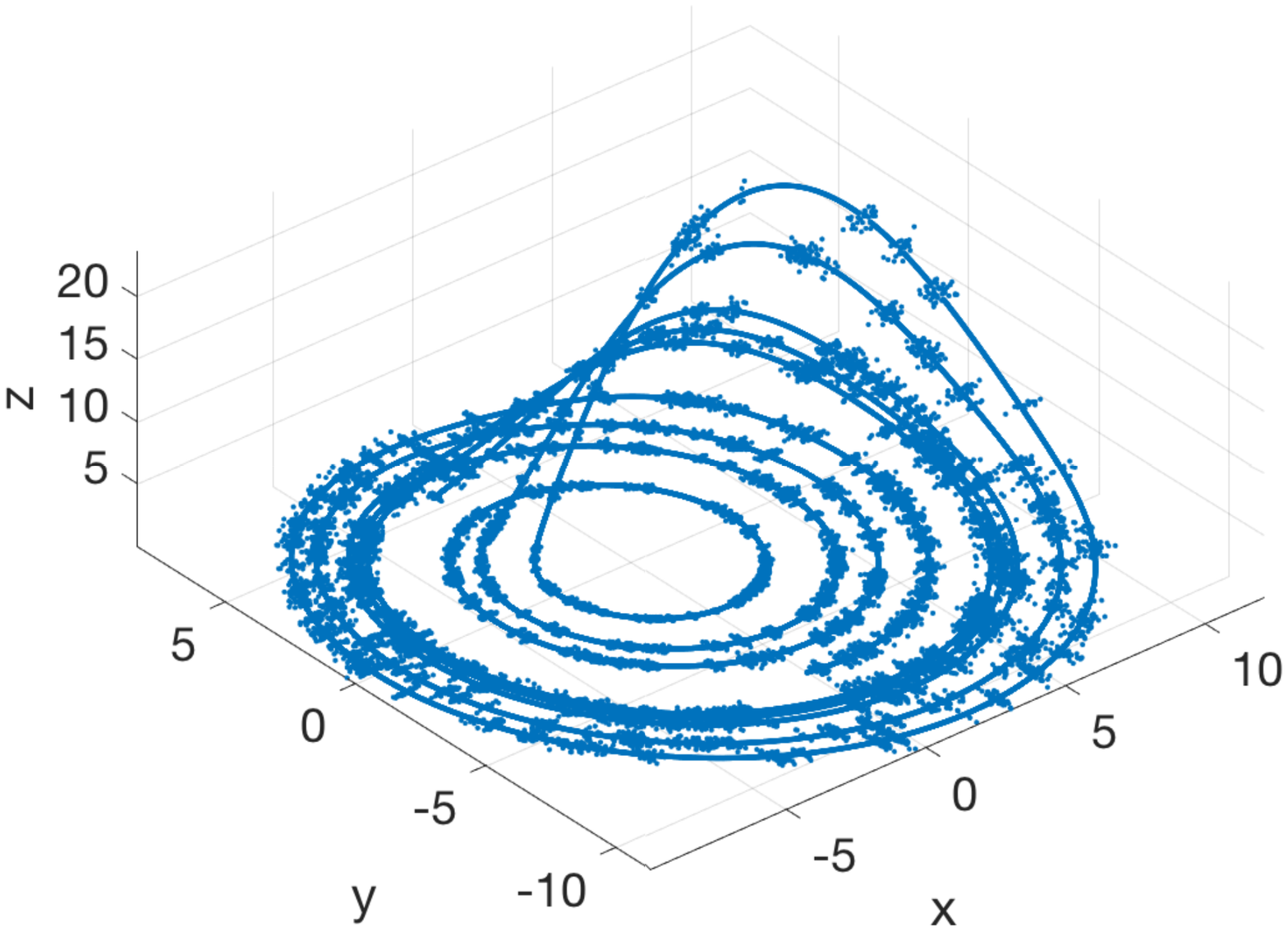}
\vspace{-3. cm}
\end{minipage}
  \begin{minipage}[b]{0.48\linewidth}
  \centering
  \begin{tabular}{ |c || c | c | c | }
    \hline
     & $ \dot{x_1}$  & $\dot{x_2}$ & $\dot{x_3}$ \\ \hline
    1 & 0 & 0  &  $0.2007$ \\ \hline
    $x_1$ & 0 & 1.0017 & 0 \\ \hline
    $x_2$ & -1.0019 & 0.1975 & 0 \\ \hline
    $x_3$ & -1.0049  & 0 & -5.7142 \\ \hline
    $x_1^2$ & 0 & 0 & 0 \\ \hline
    $x_1x_2$ & 0 & 0 & 0.00 \\ \hline
    $x_1x_3$ & 0 & 0 & 1.0025 \\ \hline
    $x_2^2$ & 0 & 0 & 0 \\ \hline
    \vdots & \vdots &\vdots &\vdots \\ \hline
    $x_3^4$ & 0 & 0 &0 \\ \hline
    \hline
  \end{tabular}
\end{minipage}
\caption{Left: R{\" o}ssler system \eqref{eqn:rossler} with 20\% corrupted data, Tfinal = 50, dt = 0.0005, hard-thres = 0.05, row-thres = 0.025. Right: the recovered coefficients. The model recovers the coefficients with 1.25\% error and detect exactly the locations of the outliers after 160 iterations.}
\label{fig:Rossler20}
\end{figure}

\begin{figure}
\begin{minipage}[b]{0.48\linewidth}
\centering
\includegraphics[width = 2.5 in]{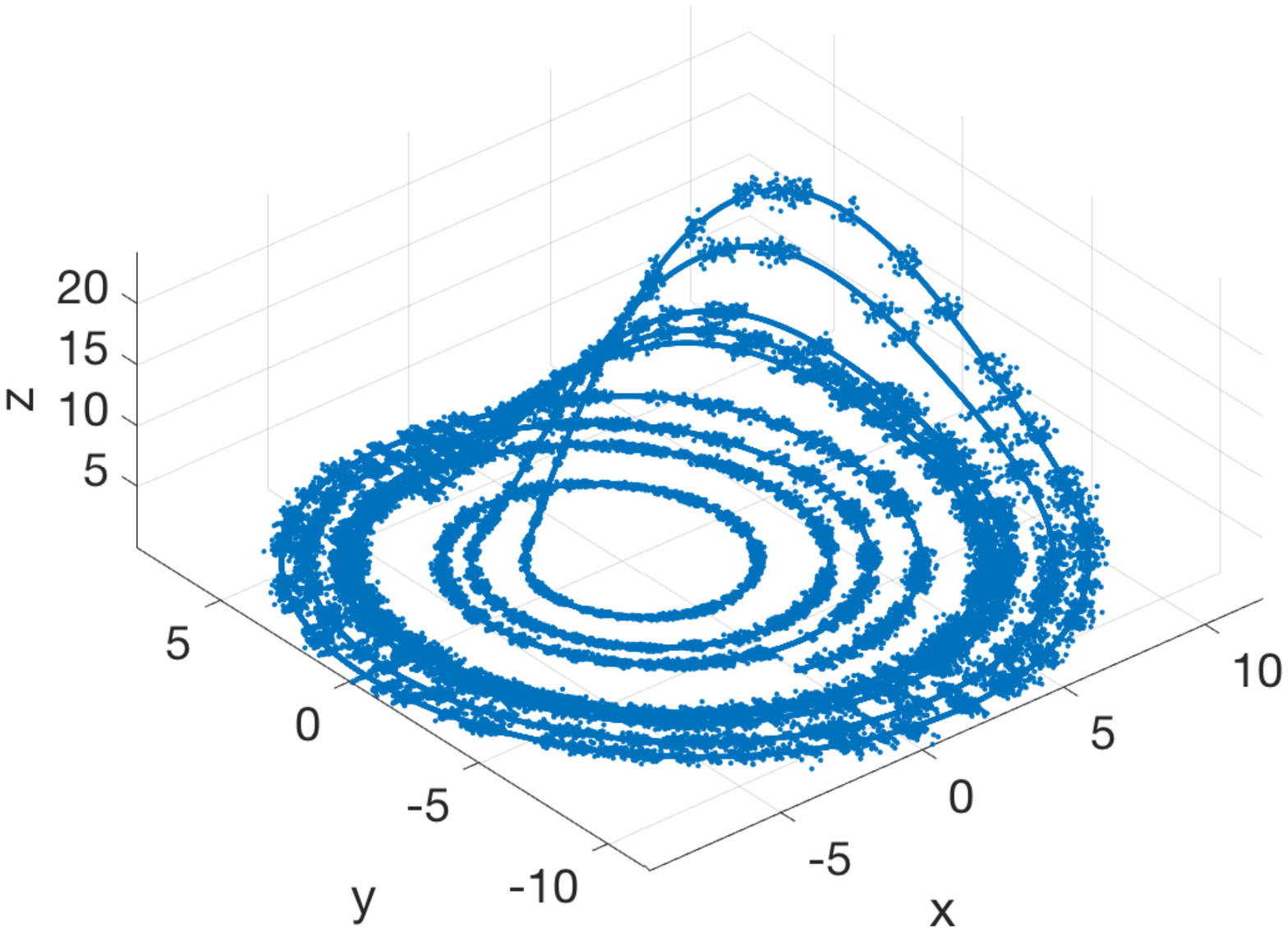}
\vspace{- 3cm}
\end{minipage}
\begin{minipage}[b]{0.48\linewidth}
\centering
  \begin{tabular}{ |c || c | c | c | }
    \hline
     & $ \dot{x_1}$  & $\dot{x_2}$ & $\dot{x_3}$ \\ \hline
    1 & 0 & 0  &  $0.1947$ \\ \hline
    $x_1$ & 0 & 1.0002 & 0 \\ \hline
    $x_2$ & -0.9922 & 0.2010 & 0 \\ \hline
    $x_3$ & -1.0203  & 0 & -5.7 \\ \hline
    $x_1^2$ & 0 & 0 & 0 \\ \hline
    $x_1x_2$ & 0 & 0 & 0.00 \\ \hline
    $x_1x_3$ & 0 & 0 & 0.9982 \\ \hline
    $x_2^2$ & 0 & 0 & 0 \\ \hline
    \vdots & \vdots &\vdots &\vdots \\ \hline
    $x_3^4$ & 0 & 0 &0 \\ \hline
    \hline
  \end{tabular}
\end{minipage}   
\caption{Left: R{\" o}ssler system \eqref{eqn:rossler} with 40\% corrupted data, Tfinal = 50, dt = 0.0005, hard-thres = 0.05, row-thres = 0.025. Right: the recovered coefficients. The model recovers the coefficients with 2\% error and detects exactly the locations of the outliers after 314 iterations.}
\label{fig:Rossler40}
\end{figure}
 
 In our final example, we apply our model to the following hyperchaos \cite{rossler1979equation}
 \begin{equation}
\begin{cases}
\frac{dx_1}{dt} &= -x_2-x_3\\
\frac{dx_2}{dt} & = x_1+0.25\, x_2 + x_4\\
\frac{dx_3}{dt} & =3 + x_1x_3\\
\frac{dx_4}{dt} &= -0.5\,x_3 + 0.05\,x_4
\end{cases}
\label{eqn:hyperchaos}
\end{equation}
As discussed in \cite{rossler1979equation}, the variable $x_3(t)$ serves to check the growth of the flow $(x_1,x_2,x_4)$ from time to time. We numerically solve the system \eqref{eqn:hyperchaos} using the fourth Runge-Kutta method with $dt = 0.001, Tfinal = 100$. The flow of the three-dimensional subspace $(x_1,x_2,x_4)$ is presented in Figure \ref{fig:hyperchaos} along with the remaining variable $x_3$ plot.
\begin{figure}
\centering
\includegraphics[width = 3.5 in]{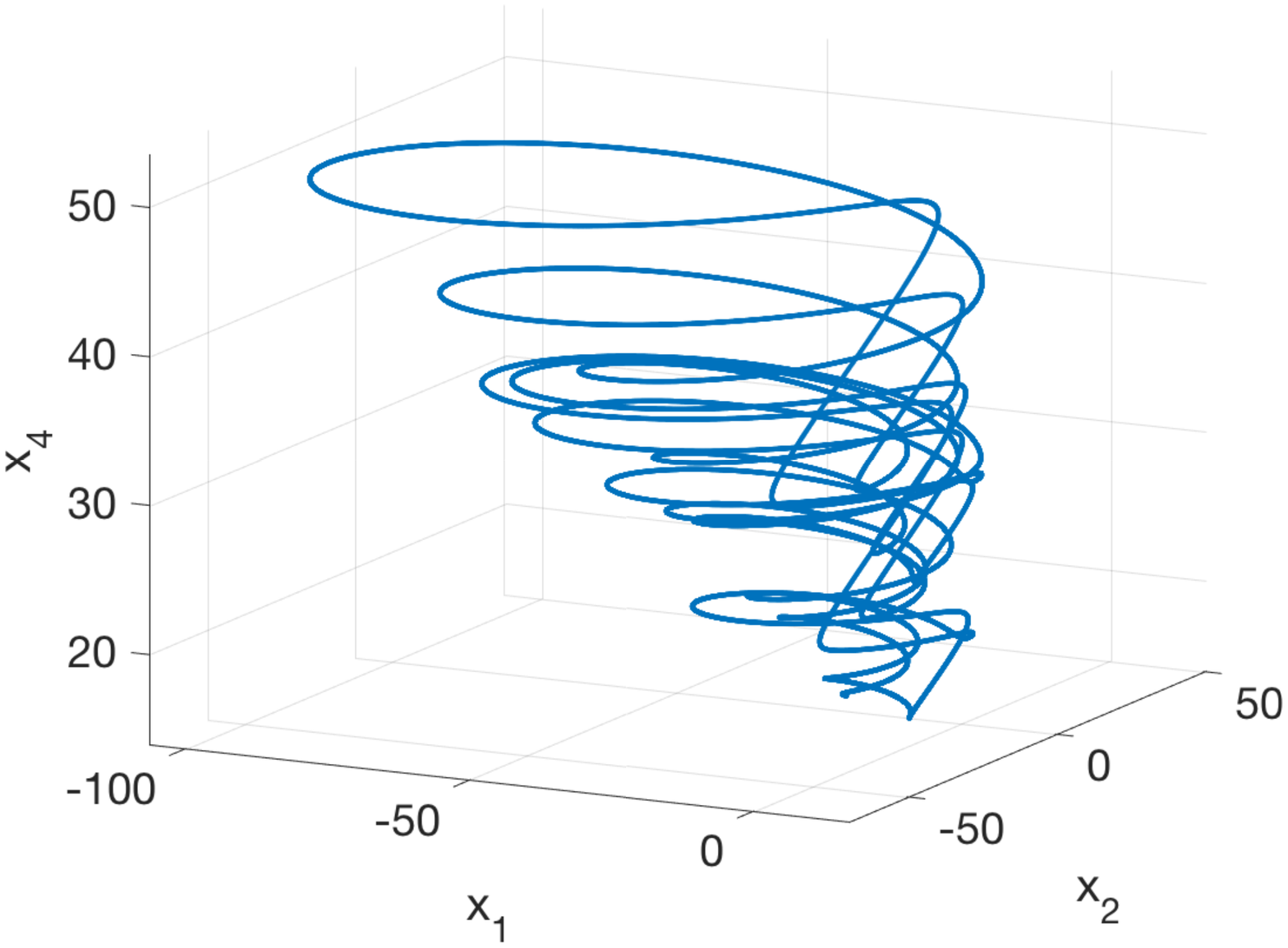}\quad
\includegraphics[width = 2.5 in]{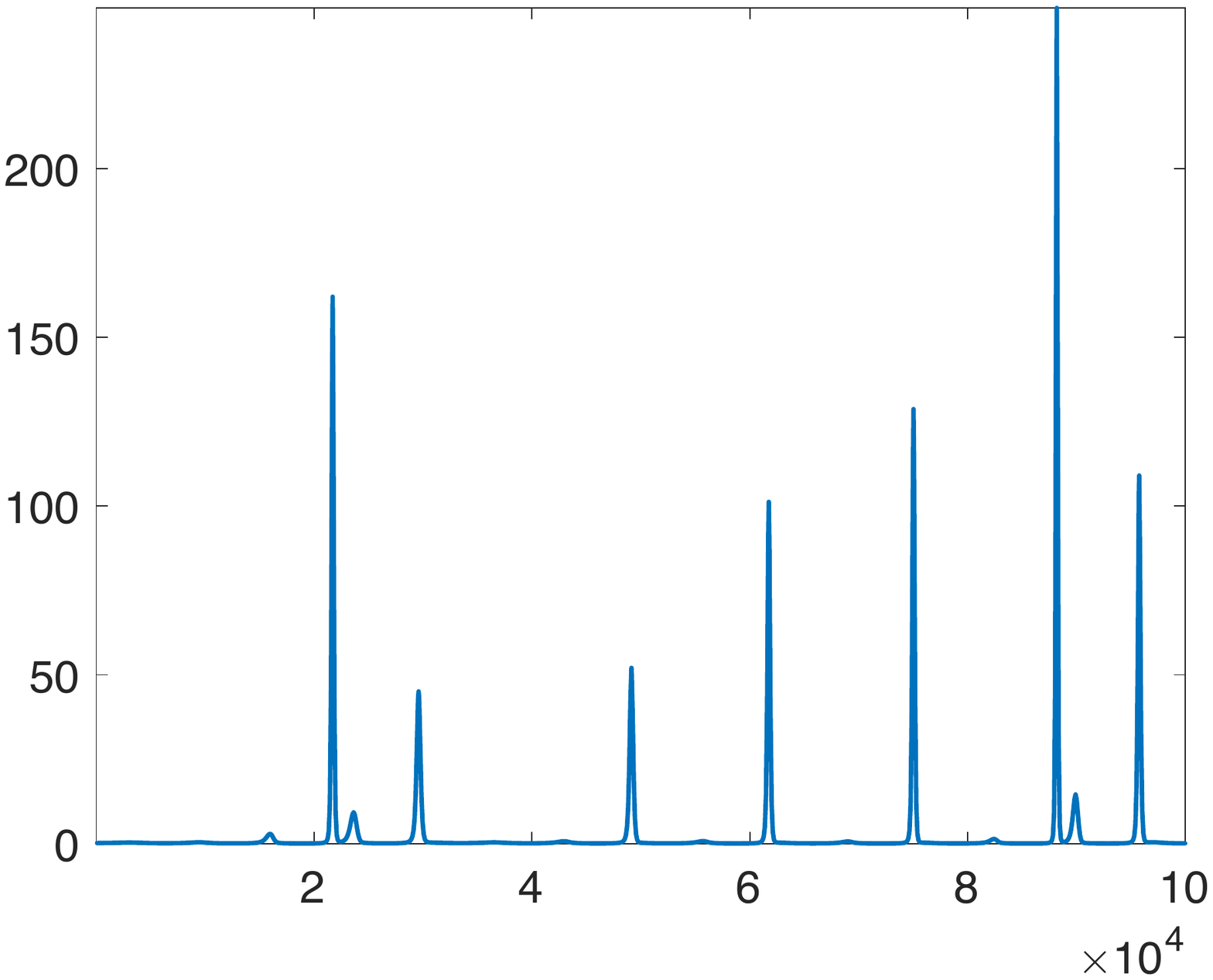}\quad
\caption{Left: Plot of trajectory flow in $(x_1,x_2,x_4)$ subspace, right: Plot of $x_3$ along time associated with the hyperchaos \eqref{eqn:hyperchaos}. Tfinal = 100, dt = 0.001.}
\label{fig:hyperchaos}
\end{figure}
Now we randomly assign the locations where the data is corrupted, and add Gaussian noise to the data at those corrupted intervals. The numerical result is shown in Figure \ref{fig:hyperchaos10}.
\begin{figure}
\begin{minipage}[b]{0.48\linewidth}
\centering
\includegraphics[width = 3. in]{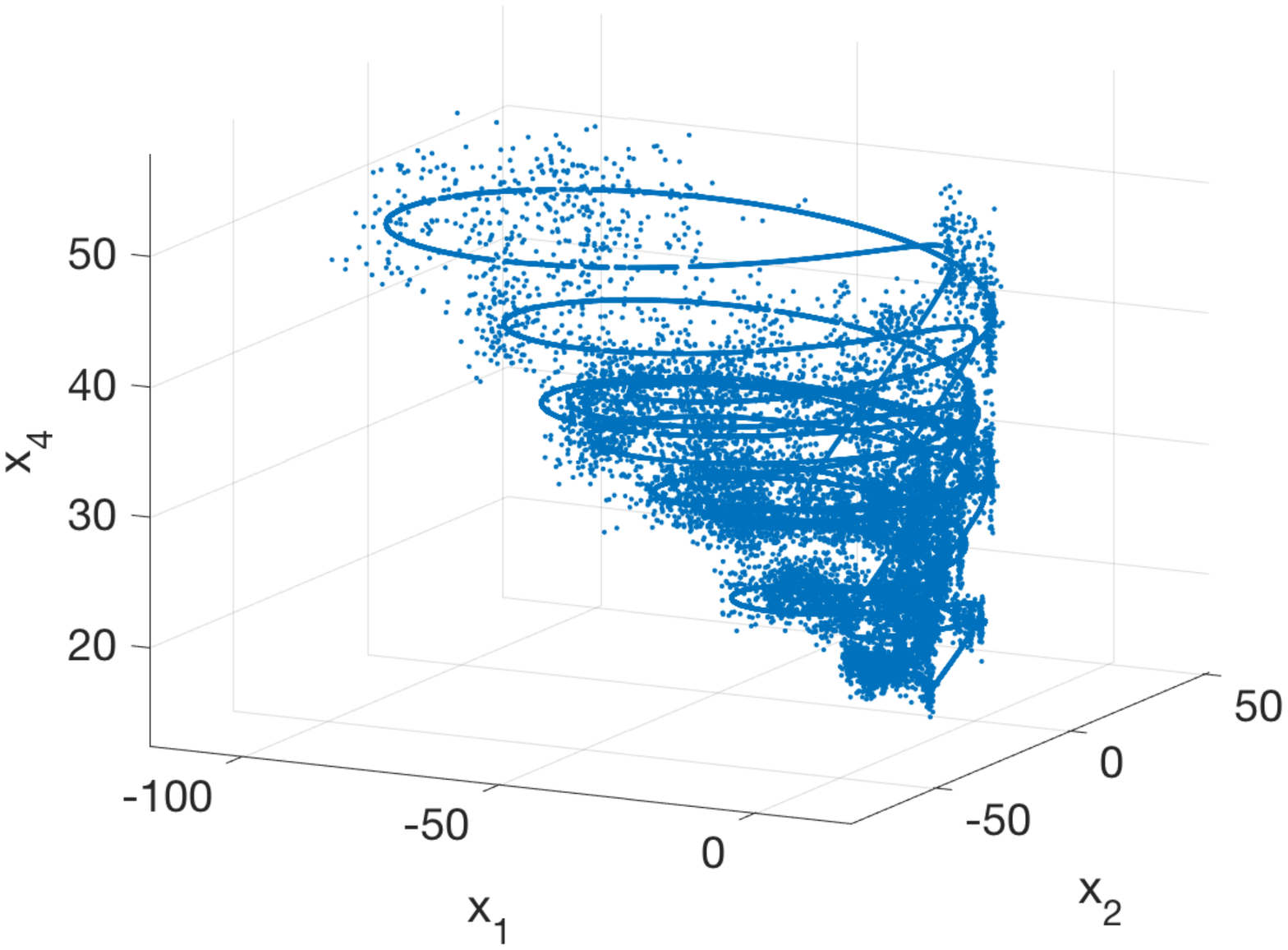}\\
\includegraphics[width = 2.5 in]{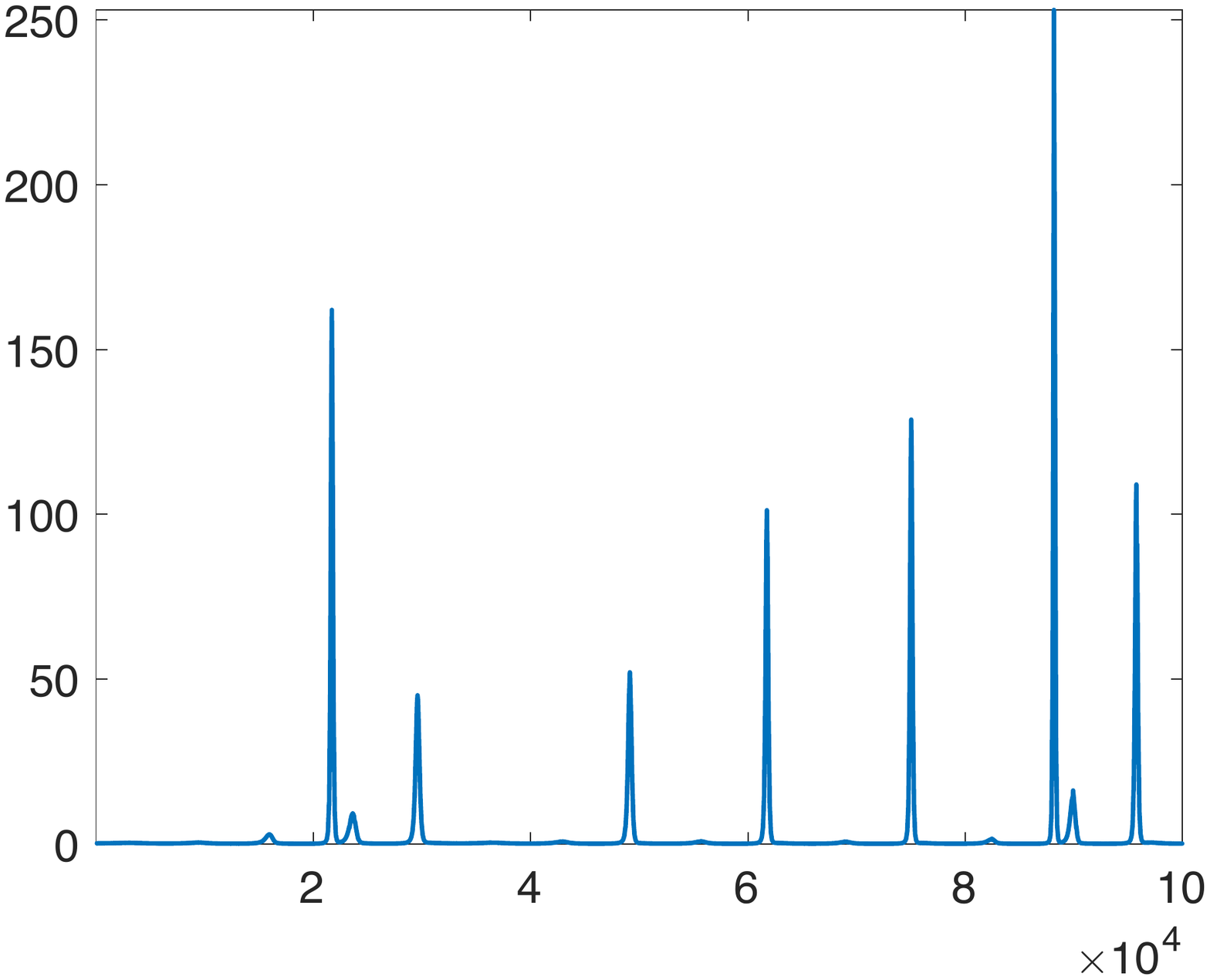}
\vspace{-3. cm}
\end{minipage}
\begin{minipage}[b]{0.48\linewidth}
\centering
  \begin{tabular}{ |c || c | c | c | c|}
    \hline
     & $ \dot{x_1}$  & $\dot{x_2}$ & $\dot{x_3}$ & $\dot{x_4}$ \\ \hline
    1 & 0 & 0  &  2.9998 & 0 \\ \hline
    $x_1$ & 0 & 0.99999 & 0 &0  \\ \hline
    $x_2$ & -0.99999 & 0.24999 & 0 & 0 \\ \hline
    $x_3$ & -0.99999  & 0 & 0& -0.49999 \\ \hline
    $x_4$ & 0 & 0.99999 & 0 & 0.04999\\ \hline
    $x_1^2$ & 0 & 0 & 0 & 0 \\ \hline
    $x_1x_2$ & 0 & 0 & 0 & 0 \\ \hline
    $x_1x_3$ & 0 & 0 & 0.99999 & 0 \\ \hline
    $x_1x_4$ & 0 & 0 & 0 & 0 \\ \hline
    \vdots & \vdots &\vdots &\vdots & \vdots \\ \hline
    $x_4^4$ & 0 & 0 &0 & 0\\ \hline
    \hline
  \end{tabular}
\end{minipage}   
\caption{Left: R{\" o}ssler hyperchaos system \eqref{eqn:rossler} with 10.25\% corrupted data, Tfinal = 100, dt = 0.001, hard-thres = 0.01, row-thres = 0.0125. Right: the recovered coefficients. The model recovers the coefficients and detect exactly the locations of the outliers after 201 iterations.}
\label{fig:hyperchaos10}
\end{figure}
\section{Conclusion and Discussion}
Using statistical properties of Lorenz-like chaotic systems and partial sparse recovery guarantees from compressive sensing, we provide conditions for recovering the governing equations from possibly highly corrupted measurement data. In addition, a stable numerical scheme is presented to recover the coefficients of the underlying equations in the space of multivariable polynomials with high accuracy and exactly identify the outliers, despite being in a regime of sensitivity to initial conditions. Our method might be useful for recovering the governing equations more generally, when the governing equations do not necessarily have a polynomial form. Explicitly, by doing our method "locally," we can approximate the Taylor series expansion to the governing equations (assuming they are sufficiently smooth), and then "piece together" the recovered polynomial Taylor expansions to approximate smooth governing equations more generally.
 In the future, we also would like to adjust the numerical scheme and theoretical guarantees to handle data with much higher level of noise in addition to outliers. We are also interested in extending theory and algorithm to settings where we observe snapshots of an observable of phase space (rather than the phase space measurement in its entirety) in the presence of outliers, as well as explore the sparsity structures in other high dimensional nonlinear functional spaces such as the space of Legendre polynomials \cite{rauhut2012sparse} and higher dimensional spaces. Finally, the constants in our theorem such that $m\geq C$ and $s\leq C'\,m^{1/(1+\eta)}$ are not explicit.  This follows because the constants are not explicit in the current theory for dynamical systems. So perhaps our work can motivate researchers in dynamical systems to make the associated constants more explicit.

\section*{Acknowledgements}
We would like to thank Afonso Bandeira and Stefan Steinerberger for helpful comments that improved this paper.  
R. Ward was partially supported by NSF CAREER grant $\#$1255631 and AFOSR YIP grant $\#$FA9550-13-1-0125. 

\bibliographystyle{alpha}

\bibliography{mainChaotic}

\end{document}